\newtheorem{theorem}{Theorem}[section]
\newtheorem{corollary}[theorem]{Corollary}
\newtheorem{lemma}[theorem]{Lemma}
\newtheorem{proposition}[theorem]{Proposition}
\newtheorem{notation}[theorem]{Notation}
\newtheorem{definition}[theorem]{Definition}
\newtheorem{remark}[theorem]{Remark}
\newtheorem{question}[theorem]{Question}
\newcommand{\revised}[1]{{\color{black} #1}}
\newcommand{\rerevised}[1]{{\color{black} #1}}
\begin{document}

\author{Francis Black}
 
 \author{Elizabeth Drellich *}
 \thanks{*Partially supported by NSF grant DMS--1248171 and NSF grant DMS--1143716}
 
 \author{Julianna Tymoczko**} \thanks {**Partially supported by NSF grant DMS--1248171}
  
\title{Valid plane trees: Combinatorial models for RNA secondary structures with Watson-Crick base pairs
}

\maketitle
%\slugger{mms}{xxxx}{xx}{x}{x--x}%slugger should be set to mms, siap, sicomp, sicon, sidma, sima, simax, sinum, siopt, sisc, or sirev
\begin{abstract}
The combinatorics of RNA plays a central role in biology.  Mathematical biologists have several commonly-used models for RNA: words in a fixed alphabet (representing the primary sequence of nucleotides) and plane trees (representing the secondary structure, or folding of the RNA sequence).  This paper considers an augmented version of the standard model of plane trees, one that incorporates some observed constraints on how the folding can occur.  In particular we assume the alphabet consists of complementary pairs, for instance the Watson-Crick pairs A-U and C-G of RNA.  

Given a word in the alphabet, a valid plane tree is a tree for which, when the word is folded around the tree, each edge matches two complementary letters.  Consider the graph whose vertices are valid plane trees for a fixed word and whose edges are given by Condon, Heitsch, and Hoos's local moves.  We prove this graph is connected.

We give an explicit algorithm to construct a valid plane tree from a primary sequence, assuming that at least one valid plane tree exists.  The tree produced by our algorithm has other useful characterizations, including a uniqueness condition defined by local moves.  We also study enumerative properties of valid plane trees, analyzing how the number of valid plane trees depends on the choice of sequence length and alphabet size.  Finally we show that the proportion of words with at least one valid plane tree goes to zero as the word size increases.   We also give some open questions.
\end{abstract}

%\begin{keywords} Plane trees, Non-crossing matchings, RNA folding\end{keywords}

%\begin{AMS} 05C30, 92C40 \end{AMS}

\pagestyle{myheadings}
\thispagestyle{plain}
\markboth{Black, Drellich, and Tymoczko}{Valid Plane Trees}

\section{Introduction}

DNA has as its core a fundamental combinatorial object: words in a finite sequence, on which permutations act, with biologically significant output.  Until fairly recently RNA was considered a mute cousin of DNA, important primarily as a stepping stone.  Unlike the famous double helix, it has a single strand.
Because this strand has many nucleotides, it tends to fold onto itself like a too-long piece of tape.  The base pairs created by this folding define its secondary structure.  And it turns out that these secondary structures (and the tertiary structures---namely the embedding of the secondary structure into 3-dimensional space) determine the functional aspects of RNA, for instance whether RNA transcribes and interprets the code in DNA, builds proteins, sends messages, or even even rewrites genetic code. 

In this paper we study a combinatorial model for RNA secondary structures consisting of  {\em plane trees}, which are rooted planar trees whose half-edges are ordered counterclockwise from the root, like those shown in Figure \ref{fig: plane trees}.    The plane tree model is well-established \cite{DiscreteTopologicalModels, Biophysics, CombinatoricsofRNAsecondarystructures} and plane trees are conveniently one of the many sets of objects enumerated by the Catalan numbers \cite{ParkingFunctionsandCn, NCPSurprisingLocations}.  Though other sets enumerated by Catalan numbers--including non-crossing matchings and balanced parentheses--are also used by mathematical biologists to model RNA secondary structures, plane trees have the advantage of depicting a simplified shape of the observed RNA secondary structure.  

\revised{However the standard plane tree model does not incorporate information about base pairs.  In this paper we assume an alphabet consisting of pairs of complementary letters and add the constraint that a letter can only pair with its complement.  For instance we might take the alphabet to consist of the four nucleotides \{A, C, G, U\} and the complementary pairs to be the Watson-Crick complements A-U and C-G.  Or we might take the alphabet to consist of codons, each of which is a sequence of three nucleotides corresponding to an amino acid, {and require that codons only bond with their perfect Watson-Crick complements.} }

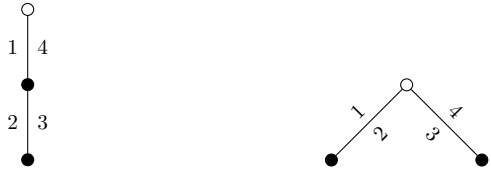
\begin{figure}[H]
\begin{center}
\begin{subfigure}[b]{0.3\textwidth}
\centering
  \begin{tikzpicture}[level distance=1cm,
level 1/.style={sibling distance=2cm},
level 2/.style={sibling distance=1cm}]
\tikzstyle{every node}=[circle, draw, scale=.8, inner sep=2pt]
\vspace{10pt}
\node (Root)[] {}
    child {
    node[fill] {}
    child { node[fill] {} 
	edge from parent 
	node[left, draw=none] {$2$}
	node[right, draw=none]  {$3$}    
    }
	edge from parent 
	node[left, draw=none] {$1$}
	node[right, draw=none]  {$4$}
};
\end{tikzpicture}   
\end{subfigure}
\begin{subfigure}[b]{0.3\textwidth}  
\centering
  \begin{tikzpicture}[sloped, level distance=1cm,
level 1/.style={sibling distance=2cm},
level 2/.style={sibling distance=1cm}]
\tikzstyle{every node}=[circle, draw, scale=.8, inner sep=2pt]
\vspace{10pt}
\node[inner sep=2pt] (Root)[] {}
    child {
    node[fill] {} 
    	edge from parent 
	node[ellipse, above, draw=none] {$1$}
	node[ellipse, below, draw=none]  {$2$}  
}
child {
    node[fill] {}
    	edge from parent 
	node[ellipse, below, draw=none] {$3$}
	node[ellipse, above, draw=none]  {$4$}  
};
\end{tikzpicture}
\end{subfigure}
\end{center}
\caption{The two plane trees with two edges}\label{fig: plane trees}
\end{figure}

A {\em primary structure} $P$ is a word in our alphabet.  Given a primary structure $P$, a plane tree $S$ is called {\em $P$-valid} if folding $P$ into the shape $S$ leaves every letter in $P$ matched with its complement (see Definition \ref{def: P-valid plane trees}).  Condon, Heitsch, and Hoos defined operations on the set of plane trees called local moves, which ``unzip" two adjacent edges and ``rezip" them in another arrangement (see Definition \ref{def: local moves}). We refine this to consider {\em valid} local moves, which require both initial and final plane tree to be valid plane trees.

\begin{figure}[H]
\begin{center}
\begin{subfigure}[b]{0.3\textwidth}
\centering
  \begin{tikzpicture}[level distance=1cm,
level 1/.style={sibling distance=2cm},
level 2/.style={sibling distance=1cm}]
\tikzstyle{every node}=[circle, draw, scale=.8, inner sep=2pt]
\vspace{10pt}
\node (Root)[] {}
    child {
    node[fill] {}
    child { node[fill] {} 
	edge from parent 
	node[left, draw=none] {$\overline{B}$}
	node[right, draw=none]  {$\overline{B}$}    
    }
	edge from parent 
	node[left, draw=none] {$B$}
	node[right, draw=none]  {$B$}
};
\end{tikzpicture}   
\end{subfigure}
\begin{subfigure}[b]{0.3\textwidth}  
\centering
  \begin{tikzpicture}[sloped, level distance=1cm,
level 1/.style={sibling distance=2cm},
level 2/.style={sibling distance=1cm}]
\tikzstyle{every node}=[circle, draw, scale=.8, inner sep=2pt]
\vspace{10pt}
\node[inner sep=2pt] (Root)[] {}
    child {
    node[fill] {} 
    	edge from parent 
	node[ellipse, above, draw=none] {$B$}
	node[ellipse, below, draw=none]  {$\overline{B}$}  
}
child {
    node[fill] {}
    	edge from parent 
	node[ellipse, below, draw=none] {$\overline{B}$}
	node[ellipse, above, draw=none]  {$B$}  
};
\end{tikzpicture}
\end{subfigure}
\end{center}
\caption{Only the plane tree on the right is valid for $P=B\overline{B}\overline{B}B$.}\label{fig: valid and non-valid plane trees}
\end{figure}
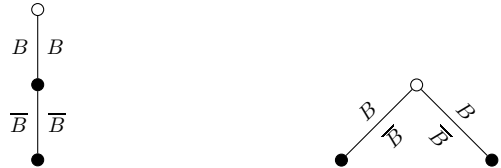

\revised{Our basic problem has non-biological antecedents, as well.  When the alphabet has exactly one complementary pair, the number of $P$-valid plane trees has a classical combinatorial interpretation as the number of ``polite" dinner party conversations between knights and ladies of the Round Table (where ``polite" encodes the notion of a noncrossing matching), as well as more serious mathematical applications to free probability theory and random matrix theory.  Kemp, Mahlburg, Rattan, and Smyth first studied this problem explicitly, giving partial results to enumerate and bound the set of P-valid plane trees for an alphabet with one complementary pair \cite{KMRS}.    Schumacher and Yan computed elegant formulas for the number of $P$-valid plane trees for specific families of sequences \cite{Schumacher2013}.}

We answer a collection of questions about primary sequences $P$ and their $P$-valid plane trees.  The first set assume we are given a primary structure and ask about the possible secondary structures:
\begin{itemize}
\item Can we determine if a valid plane tree for $P$ exists, and if so find it?   (Yes, by a greedy algorithm described in Definition \ref{def: greedy algorithm}.)
\item Given a primary sequence $P$ with at least one valid plane tree, how many orbits does it have under the action of valid local moves?  (Just one, by Theorem \ref{theorem: graph is connected}.)  
\item Valid local moves of type 2 are a kind of collapsing operation on valid plane trees, \rerevised{ defined in Definition \ref{def: local moves}. They allow us to create a directed graph whose vertices are the valid plane trees; for instance, there is a local move of type 2 from left to right in the plane trees in Figure \ref{fig: plane trees}.  On which valid plane trees can a valid local move of type 2 be performed?}  (On all trees except the tree produced by the greedy algorithm; see Corollaries \ref{theorem: non-greedy trees have type 2} and \ref{cor: unique sink}.)
\end{itemize}
The second set of questions asks about the possible primary sequences that satisfy various constraints: 
\begin{itemize}
\item How many sequences of length $2n$ have exactly $k$ valid plane trees?  (See results in Section \ref{sec: number of valid trees}, including Remark \ref{remark: bounding V(P)} and Proposition \ref{proposition: nothing between C_n and C_{n-1}}, which say none if $k>C_n$ or if $C_{n-1}<k<C_n$ respectively.  Also see recent work of Wagner \cite{Wagner}.)
\item How does the collection of valid plane trees depend on the choice of sequence length $2n$ and alphabet size $2m$? (See Section \ref{sec: number of valid trees} for a preliminary study.)
\item How many sequences $P$ have a valid plane tree?  (Relatively few---Theorem \ref{theorem: limiting zero} shows that the ratio of sequences with a valid plane tree to all sequences approaches zero as {sequence length} increases.)
\end{itemize}

\section{Valid plane trees and valid local moves}

A plane tree is a rooted tree  for which the children of each vertex are ordered.  When we draw plane trees, we depict the order by arranging the children left-to-right.  For example these two trees are different plane trees:
\begin{center}
  \begin{tikzpicture}[level distance=1cm,
level 1/.style={sibling distance=2cm},
level 2/.style={sibling distance=1cm}]
\tikzstyle{every node}=[circle, draw, scale=.8, inner sep=2pt]
\vspace{10pt}
\node (Root)[] {}
    child {
    node[fill] {} child {
    node[fill] {}
    	edge from parent 
}
    	edge from parent 
}
child {
    node[fill] {}
    	edge from parent 
};
\end{tikzpicture}   
\hspace{2cm}
  \begin{tikzpicture}[sloped, level distance=1cm,
level 1/.style={sibling distance=2cm},
level 2/.style={sibling distance=1cm}]
\tikzstyle{every node}=[circle, draw, scale=.8, inner sep=2pt]
\vspace{10pt}
\node[inner sep=2pt] (Root)[] {}
    child {
    node[fill] {} 
    	edge from parent 
}
child {
    node[fill] {} child {
    node[fill] {}
    	edge from parent
}
    	edge from parent 
};
\end{tikzpicture}
\end{center}
We label the half-edges of a plane tree {of size $n$} with the numbers $1$ through $2n$ starting on the left side of the leftmost edge adjacent to the root, and walking around the tree counter-clockwise.  The edges of the tree are called $e(i,j)$ where $i$ and $j$ are the labels on the left and right sides of the edge, respectively.  For example this tree contains the edges $e(1,2), e(3,6)$ and $e(4,5)$:
\begin{center}
  \begin{tikzpicture}[sloped, level distance=1cm,
level 1/.style={sibling distance=2cm},
level 2/.style={sibling distance=1cm}]
\tikzstyle{every node}=[circle, draw, scale=.8, inner sep=2pt]
\vspace{10pt}
\node[inner sep=2pt] (Root)[] {}
    child {
    node[fill] {} 
    	edge from parent 
	node[above, draw=none] {$1$}
	node[below, draw=none]  {$2$} 
}
child {
    node[fill] {} child {
    node[fill] {}
    	edge from parent
		node[above, draw=none] {$5$}
	node[below, draw=none]  {$4$}  
}
    	edge from parent 
		node[above, draw=none] {$6$}
	node[below, draw=none]  {$3$} 
};
\end{tikzpicture}
\end{center}

In the classical plane tree model of RNA-folding, the sequence of labels on the half-edges corresponds to the primary structure.  The plane tree model as presented by Condon, Heitsch, and Hoos  assumed that each half-edge corresponded to a string of six $G$s or six $C$s, all of which are paired  \cite[see Section 4]{Heitsch-Combinatorics-on-Plane-Trees}.  Hairpin loops, bulges, and other unmatched biological structures are omitted in the graph. (In fact RNA secondary structure always contains unpaired nucleotides since a hairpin loop with at least three unpaired nucleotides is needed for any pairs to bond {at all}.)

%\begin{todo} I deleted a paragraph here that should be considered and potentially mined for more useful material \end{todo}

%The plane tree model assumes every half-edge is paired; we do not consider hairpin loops, bulges, or other unmatched biological structures.  The plane tree model for RNA folding assumes that the primary sequence is $(AAAACCCCAAAAGGGG)^n$.\begin{todo} get citation for this?  I don't even understand how it is consistent with the previous sentence\end{todo}  A simplified presentation of this sequence is $(CG)^n$.  This string has the advantage of letting us ignore the primary sequence and assume that 1) every nucleotide will be paired and,2) each odd-numbered nucleotide can pair with any even-numbered nucleotide. If we allow the primary sequence to be any word in $\{C,G\}$ with an equal number of $C$s and $G$s, then we can maintain the first assumption, but the second is no longer true.  

Plane trees are also important combinatorial objects.  We note three classical properties that are  useful in proofs.

\begin{proposition} \label{proposition: properties of plane trees}
Properties of plane trees: 
\begin{enumerate}
\item (Non-crossing condition) No edges $e(i,j)$ and $e(i',j')$ have $i<i'<j<j'$.
\item For each edge $e(i,j)$ the half-edges labeled $\{i+1,i+2,\ldots,j-1\}$ form a subtree, as do the half-edges labeled $\{1,2,\ldots,i-1,j+1,j+2,\ldots,n\}$.
\item For each edge $e(i,j)$ the half-edges $i$ and $j$ are of opposite parity.
\end{enumerate}
\end{proposition}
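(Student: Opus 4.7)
The overall plan is to prove statement (2) first, from the definition of the half-edge labeling, and then derive (1) and (3) as corollaries. Statement (2) is really a rephrasing of how the counter-clockwise walk that defines the labeling works: each time the walk descends into an edge from a vertex $v$, it exhaustively traverses the subtree rooted at the child before returning up that same edge. So the labels strictly between the two labels of any edge must be precisely the half-edges of the subtree below that edge.

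To make this rigorous, I would induct on the number of edges $n$. The base case $n=1$ is immediate: the unique edge has half-edges $1,2$, the ``inner'' subtree is empty, and the ``outer'' subtree is empty. For the inductive step, let $T$ be a plane tree with $n+1$ edges. Consider any edge $e(i,j)$ in $T$ and let $v$ be its lower endpoint. The inductive hypothesis applied to the subtree rooted at $v$ (relabelled starting from $1$) gives a counter-clockwise walk around that subtree producing consecutive labels. Gluing this walk into the counter-clockwise walk around $T$, the labels along this sub-walk become $i+1, i+2, \ldots, j-1$ exactly, by the definition of the labeling. The complement $\{1,\ldots,i-1,j+1,\ldots,2n\}$ then labels the half-edges of the plane tree obtained by deleting everything strictly below $e(i,j)$, which is again a plane tree, giving (2).

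Statement (3) follows by counting: if the subtree indexed by $\{i+1,\ldots,j-1\}$ in (2) has $k$ edges, it has $2k$ half-edges, so $j-i-1 = 2k$ and hence $j-i$ is odd, forcing $i$ and $j$ to have opposite parities. Statement (1) follows by contradiction from (2): if $e(i,j)$ and $e(i',j')$ are edges with $i<i'<j<j'$, then $i' \in \{i+1,\ldots,j-1\}$, so by (2) the half-edge $i'$ belongs to the subtree $T'$ below $e(i,j)$. But then the entire edge $e(i',j')$ lies in $T'$, forcing its other half-edge $j'$ also into $\{i+1,\ldots,j-1\}$, contradicting $j'>j$.

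The main obstacle is the inductive setup for (2): one needs to justify carefully that the counter-clockwise walk ``restricts'' to a counter-clockwise walk on any subtree rooted at a child, with the labels merely shifted by a constant. This is essentially a bookkeeping claim about how the Euler-tour labeling interacts with deletion of an edge and its descendants, and once stated precisely it follows from the description of the walk as always turning as far left (counter-clockwise) as possible at each vertex. Everything after that is a short counting or case argument.
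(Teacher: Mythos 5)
Your proof is correct, and it reverses the logical order the paper uses. The paper's (very terse) proof takes the non-crossing condition (1) as the primitive fact ``following from the construction,'' then derives the subtree property (2) from (1), and parity (3) from (2). You instead take (2) as primitive---arguing by induction on $n$ that the counter-clockwise walk labels the subtree below any edge $e(i,j)$ with exactly the consecutive block $\{i+1,\ldots,j-1\}$---and then obtain both (1) and (3) as quick corollaries. Both routes are sound, but yours is arguably the more honest one: the claim that ``non-crossing follows from the construction'' is itself just a compressed version of the Euler-tour bookkeeping you spell out, so grounding everything in (2) puts the one genuinely inductive step where it belongs and makes (1) a two-line consequence rather than an appeal to intuition. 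Your counting argument for (3) ($j-i-1=2k$ forces opposite parity) matches the spirit of the paper's derivation of (3) from (2). The only thing to watch is the inductive gluing step you flag yourself---that deleting the subtree below $e(i,j)$ leaves a plane tree whose induced labeling is the original one with a constant shift on the tail---but as you note this is routine once the walk is described precisely.
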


\begin{proof}
The non-crossing condition follows from the construction of plane trees  \cite[Section 4.4.2]{Bona-Handbook}.  The second property follows from the non-crossing condition and the third property follows from the second.
\end{proof}

Condon, Heitsch, and Hoos defined local moves to be the following operation on pairs of edges in a plane tree, which corresponds to an unfolding-and-refolding operation on nearby base pairs in a strand of RNA \cite[Definition 8]{Heitsch-Combinatorics-on-Plane-Trees}.

\begin{definition}
\label{def: local moves}
Let $i<j<i'<j'$ and fix a plane tree $S$. The two local moves are
\begin{itemize}
\item type 1: if $e(i,j)$ and $e(i',j')$ are adjacent edges in $S$ then  $e(i,j)$, $e(i',j')$ are replaced by $e(i,j')$, $e(j,i')$.
\item type 2: if $e(i,j')$ and $e(j,i')$ are adjacent edges in $S$ then  $e(i,j')$, $e(j,i')$ are replaced by $e(i,j)$, $e(i',j')$.
\end{itemize} 

A local move results in a new plane tree $S'$.
\end{definition}

Figure \ref{fig: local moves} shows the local moves.  Note that local moves can be performed on edges without any successively-labeled half-edges, namely with $i<j-1$ and $j<i'-1$ and $i'< j'-1$.  In other words there can be many other edges incident to the vertices in Figure \ref{fig: local moves}, including edges that come between the edges sketched in the schematic.

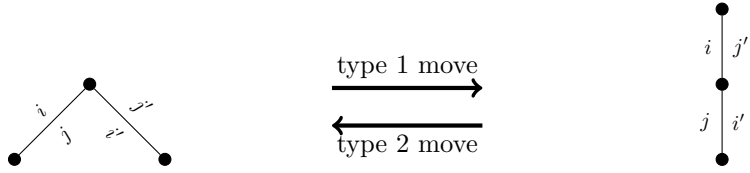
\begin{figure}[H]
\begin{center}
\begin{subfigure}[b]{0.3\textwidth}  
\centering
  \begin{tikzpicture}[sloped, level distance=1cm,
level 1/.style={sibling distance=2cm},
level 2/.style={sibling distance=1cm}]
\tikzstyle{every node}=[circle, draw, scale=.8, inner sep=2pt]
\vspace{10pt}
\node[inner sep=2pt] (Root)[fill] {}
    child {
    node[fill] {} 
    	edge from parent 
	node[ellipse, above, draw=none] {$i$}
	node[ellipse, below, draw=none]  {$j$}  
}
child {
    node[fill] {}
    	edge from parent 
	node[ellipse, below, draw=none] {$i'$}
	node[ellipse, above, draw=none]  {$j'$}  
};
\end{tikzpicture}
\end{subfigure}
\begin{subfigure}[b]{0.2\textwidth}
\centering
\begin{tikzpicture}
\draw [->, ultra thick] (-1,0.5)--(1,0.5);
\draw [<-, ultra thick] (-1,0)--(1,0);
\node [above]  at (0,.5) {type 1 move};
\node [below]  at (0,0) {type 2 move};
\end{tikzpicture}
\end{subfigure}
\begin{subfigure}[b]{0.3\textwidth}
\centering
  \begin{tikzpicture}[level distance=1cm,
level 1/.style={sibling distance=2cm},
level 2/.style={sibling distance=1cm}]
\tikzstyle{every node}=[circle, draw, scale=.8, inner sep=2pt]
\vspace{10pt}
\node (Root)[fill] {}
    child {
    node[fill] {}
    child { node[fill] {} 
	edge from parent 
	node[left, draw=none] {$j$}
	node[right, draw=none]  {$i'$}    
    }
	edge from parent 
	node[left, draw=none] {$i$}
	node[right, draw=none]  {$j'$}
};
\end{tikzpicture}   
\end{subfigure}

\end{center}
\caption{Local moves}
\label{fig: local moves}
\end{figure}

We define plane trees for sequences other than the sequence that Condon, Heitsch, and Hoos used.   We are motivated by the  example of RNA but an arbitrary alphabet consisting of complementary pairs is more mathematically natural.

\revised{\begin{definition}
A set $\mathcal{A}$ is a complementary alphabet if for every letter $B\in \mathcal{A}$ there is a unique complement $\overline{B}\in \mathcal{A}$ that is distinct from $B$.  If $\mathcal{A}$ is a complementary alphabet then taking the complement is an involution on $\mathcal{A}$ with $\overline{\overline{B}}=B$ for each $B \in \mathcal{A}$.
\end{definition}}

Though the collection of nucleotides $\{A,C,G,U\}$ is our foundational example, it is in fact not a complementary alphabet.  The Watson-Crick pairs $C-G$ and $A-U$ form the strongest and second-strongest bonds respectively.  However so-called ``wobble pairs" $G-U$ also bond in nature, though more weakly than the others. 

The next definition introduces one of the key objects in this paper: valid plane trees. Valid plane trees improve upon the plane tree model by considering whether, using only perfect complements, a primary structure could actually form the secondary structure of a particular plane tree.  

\revised{\begin{definition}[$P$-valid plane trees]
\label{def: P-valid plane trees}
Let $S$ be a plane tree with $n$ edges and let $P=p_1p_2\cdots p_{2n}$ be a word in a complementary alphabet $\mathcal{A}$.  We say  $S$ is a  $P$-valid plane tree if for every edge $e(i,j)$ in $S$ the letters $p_i$ and $p_j$ are a complementary pair in $\mathcal{A}$.
We may refer to $S$ as a valid plane tree if $P$ is clear from context.  We denote the set of all $P$-valid plane trees $\mathcal{V}(P)$.
\end{definition}}

We only consider local moves that send  $P$-valid plane trees to other $P$-valid plane trees. More precisely we have the following definition.

\begin{definition}
\label{def: valid local moves}
Fix a primary sequence $P$ and let $S$ be a $P$-valid plane tree, i.e. $S\in \mathcal{V}(P)$. A valid local move on $S$ is a local move on $S$ such that the resulting plane tree $S'$ is also in $\mathcal{V}(P)$.
\end{definition}

The following two claims are immediate  since local moves are invertible.

\begin{proposition} A local move of type 1 is valid if and only if the inverse local move of type 2 is valid.
\end{proposition}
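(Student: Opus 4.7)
The plan is to observe that this is almost immediate from the definitions, with the only content being that type 1 and type 2 local moves are literal inverses of each other on the set of plane trees.

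First, I would unpack the definitions. A type 1 move is determined by a pair of edges $e(i,j), e(i',j')$ with $i<j<i'<j'$ in a plane tree $S$; applying it produces the plane tree $S'$ obtained by replacing those edges with $e(i,j'), e(j,i')$. Conversely, a type 2 move applied to $S'$ on the pair $e(i,j'), e(j,i')$ recovers $S$ exactly. Thus if $m$ denotes the type 1 move from $S$ to $S'$, there is a canonical type 2 move $m^{-1}$ from $S'$ back to $S$, and this is what we mean by ``the inverse local move.''

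Next I would apply Definition \ref{def: valid local moves}. The move $m$ is valid precisely when both $S$ and $S'$ are $P$-valid. The inverse move $m^{-1}$ goes from $S'$ to $S$, so by the same definition it is valid precisely when both $S'$ and $S$ are $P$-valid. These two conditions are identical, so $m$ is valid if and only if $m^{-1}$ is valid.

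There is essentially no obstacle here: the work is entirely in making sure the word ``inverse'' is interpreted as the type 2 move that exchanges the two edge pairs appearing in Definition \ref{def: local moves}, so that the starting and ending plane trees of $m$ and $m^{-1}$ are the same pair $\{S, S'\}$ with roles swapped. Once that is observed, validity is a symmetric condition on this unordered pair and the biconditional follows.
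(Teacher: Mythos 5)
Your argument is correct and matches the paper's reasoning exactly: the paper dismisses this proposition as immediate ``since local moves are invertible,'' and your write-up simply spells out that a type 1 move and its inverse type 2 move connect the same unordered pair $\{S,S'\}$, so the validity condition (both trees being $P$-valid) is the same for both. No gaps; this is the intended proof, made explicit.
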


\begin{proposition} The following conditions are necessary and sufficient to guarantee the existence of a valid local move with respect to $P=p_1p_2\cdots p_{2n}$

\begin{itemize}
\item There is a valid type 1 local move on adjacent edges $e(i,j)$ and $e(i',j')$ if and only if for some $B, \overline{B}\in \mathcal{A}$ the letters $p_i = p_{i'} = B$ and $p_j=p_{j'}=\overline{B}$. 
 \item There is a valid type 2 move on adjacent edges $e(i,j')$ and $e(j,i')$ if and only if  for some $B, \overline{B}\in \mathcal{A}$ the letters  $p_i=p_{i'}=B$ and $p_j=p_{j'}=\overline{B}$. \end{itemize}
\end{proposition}

In fact the property of being $P$-valid is inherited by subtrees in the following sense.  

\begin{proposition}\label{proposition: valid subtrees}
Suppose $T$ is a $P$-valid tree and that $T$ contains an edge $e(i,j)$ and denote by $v$ the vertex of $e(i,j)$ farthest from the root. 
\begin{itemize} \item The subgraph $T'$ induced from $e(i,j)$ together with all descendants of $v$ is equivalently characterized as the subgraph containing half-edges $i,i+1,\ldots,j-1,j$.  \end{itemize}

Define $T''$ to be the subgraph of $T$ obtained by erasing $T'$ from $T$.  Define $P'$ to be the subsequence of $P$ consisting of the labels on the half-edges $i,i+1,i+2,\ldots,j-1,j$ and $P''$ to be the subsequence of $P$ consisting of the labels on the half-edges $1,2,\ldots,i-1,j+1,j+2,\ldots,n$ retaining the original indexing for notational convenience. 
\begin{itemize} \item Then $T'$ is a $P'$-valid tree and $T''$ is a $P''$-valid tree. \end{itemize}\end{proposition}

\begin{proof}
By our convention of labeling half-edges in a plane tree, the half-edges descended from $v$ in $T$ are labeled $i+1,i+2,\ldots,j-1$.  (Figure \ref{figure: edge partitioning tree} shows a schematic.)  The descendants of a vertex in a rooted tree form a subtree, so $T'$ is a $P'$-valid tree.  The graph $T''$ is connected by construction and is formed of the half-edges labeled $1,2,\ldots,i-1,j+1,j+2,\ldots,n$ so $T''$ is a $P''$-valid tree.
\end{proof}

The condition of being $P$-valid {respects the fact that any two plane trees of size $n$}  either have some overlap or are very close to having some overlap, as the following proposition makes precise. 

\begin{proposition}\label{proposition: sharing an edge}
Given two trees $T, T' \in \mathcal{V}(P)$ either $T$ and $T'$ share an edge or there exists a valid local move on one, say from $T'$ to $T''$, such that $T$ and $T''$ share an edge.  
\end{proposition}

\begin{proof}
Suppose $T$ and $T'$ have no common edges.  Suppose $e(i,i+1)$ is a leaf of $T$.  Since it is not a leaf in $T'$ the half-edges labeled $i$ and $i+1$ have one of the following configurations in $T'$: 
\begin{figure}[H]
\begin{center}
\begin{subfigure}[b]{0.3\textwidth}  
\centering
\begin{tikzpicture}[level distance=1cm,
level 1/.style={sibling distance=2cm},
level 2/.style={sibling distance=1cm}]
\tikzstyle{every node}=[circle, draw, scale=.8, inner sep=2pt]
\vspace{10pt}
\node (Root)[fill] {}
    child {
    node[fill] {}
    child { node[fill] {} 
	edge from parent 
	node[left, draw=none] {$i + 1$}
	node[right,draw=none]  {$k$}    
    }
	edge from parent 
	node[left, draw=none] {$i$}
	node[right, draw=none]  {$j$}
};
\end{tikzpicture}  
\caption{$i$ and $i+1$ on left \label{fig:samerowleft}}   
\end{subfigure}
\begin{subfigure}[b]{0.3\textwidth}
\centering
  \begin{tikzpicture}[level distance=1cm,
level 1/.style={sibling distance=2cm},
level 2/.style={sibling distance=1cm}]
\tikzstyle{every node}=[circle, draw, scale=.8, inner sep=2pt]
\vspace{10pt}
\node (Root)[fill] {}
    child {
    node[fill] {}
    child { node[fill] {} 
	edge from parent 
	node[left, draw=none] {$j$}
	node[right, draw=none]  {$i$}    
    }
	edge from parent 
	node[left, draw=none] {$k$}
	node[right, draw=none]  {$i+1$}
};
\end{tikzpicture}
\caption{$i$ and $i+1$ on right \label{fig:samerowright}}   
\end{subfigure}
\begin{subfigure}[b]{0.3\textwidth}  
\centering
  \begin{tikzpicture}[sloped, level distance=1cm,
level 1/.style={sibling distance=2cm},
level 2/.style={sibling distance=1cm}]
\tikzstyle{every node}=[circle, draw, scale=.8, inner sep=2pt]
\vspace{10pt}
\node[inner sep=2pt] (Root)[fill] {}
    child {
    node[fill] {} 
    	edge from parent 
	node[ellipse, above, draw=none] {$j$}
	node[ellipse, below, draw=none]  {$i$}  
}
child {
    node[fill] {}
    	edge from parent 
	node[ellipse, below, draw=none] {$i + 1$}
	node[ellipse, above, draw=none]  {$k$}  
};
\end{tikzpicture}
\caption{$i$ and $i+1$ on peak \label{fig:diffrowV}} 
\end{subfigure}
\end{center}
\caption{}
\end{figure}
Since $e(i,i+1)$ is an edge of a valid plane tree, the half-edges $i$ and $i+1$ are labeled $B$ and $\overline{B}$ respectively and so the half-edges $j$ and $k$ must be labeled $\overline{B}$ and $B$ respectively, regardless of the configuration.  Each configuration has a valid local move that results in a tree $T''$ with leaf $e(i,i+1)$ as desired. 
\end{proof}

We can use valid plane trees and local moves to create a graph.  Later sections will prove key properties of the graph defined in the next proposition.  The proof follows immediately from earlier propositions.

\begin{proposition}
Fix a primary structure $P$.  Let $\mathcal{G}_P$ be the graph whose vertices are the elements of the set $\mathcal{V}(P)$ with an edge between two  plane trees if they are connected by a valid local move.  Then $\mathcal{G}_P$ is a well-defined undirected graph. 
\end{proposition}
\revised{
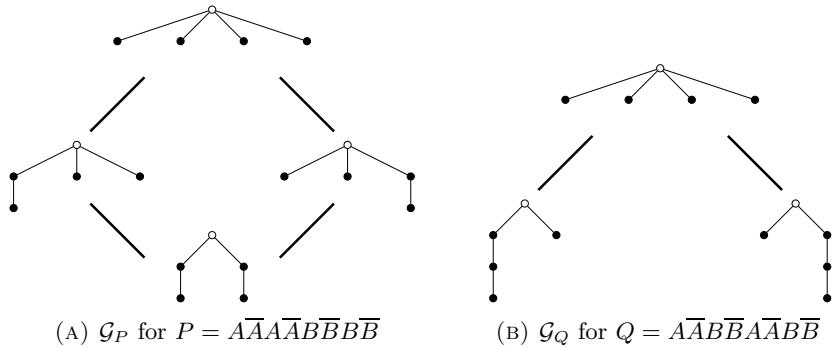
\begin{figure}[H]
\centering
\begin{subfigure}[b]{.35 \textwidth}

\scalebox{.6}{  \begin{tikzpicture}[sloped, level distance=1cm,
level 1/.style={sibling distance=2cm},
level 2/.style={sibling distance=1cm}]
\tikzstyle{every node}=[circle, draw, scale=.8, inner sep=2pt]
\vspace{10pt}
\begin{scope}[scale=.7]
\node[inner sep=2pt] (Root)[] {}
    child {
    node[fill] {}      child {
    node[fill] {}  
    	edge from parent 
	node[ellipse, below, draw=none] {}
	node[ellipse, above, draw=none]  {}  
}
    	edge from parent 
	node[ellipse, below, draw=none] {}
	node[ellipse, above, draw=none]  {}  
}    child {
    node[fill] {} 
    	edge from parent 
	node[ellipse, above, draw=none] {}
	node[ellipse, below, draw=none]  {}  
}
    child {
    node[fill] {}  
    	edge from parent 
	node[ellipse, below, draw=none] {}
	node[ellipse, above, draw=none]  {}  
};
\end{scope}
\begin{scope}[shift={(6,0)}, scale=.7]
\node[inner sep=2pt] (Root)[] {}
    child {
    node[fill] {}  
    	edge from parent 
	node[ellipse, below, draw=none] {}
	node[ellipse, above, draw=none]  {}  
}
    child {
    node[fill] {}  
    	edge from parent 
	node[ellipse, below, draw=none] {}
	node[ellipse, above, draw=none]  {}  
}
    child {
    node[fill] {} 
    child {
    node[fill] {}
    	edge from parent 
	node[ellipse, below, draw=none] {}
	node[ellipse, above, draw=none]  {}  
}
    	edge from parent 
	node[ellipse, above, draw=none] {}
	node[ellipse, below, draw=none]  {}  
}
;
\end{scope}
\begin{scope}[shift={(3,3)}, scale=.7]
\node[inner sep=2pt] (Root)[] {}
    child {
    node[fill] {}  
    	edge from parent 
	node[ellipse, below, draw=none] {}
	node[ellipse, above, draw=none]  {}  
}
    child {
    node[fill] {}  
    	edge from parent 
	node[ellipse, below, draw=none] {}
	node[ellipse, above, draw=none]  {}  
}
    child {
    node[fill] {}  
    	edge from parent 
	node[ellipse, below, draw=none] {}
	node[ellipse, above, draw=none]  {}  
}
    child {
    node[fill] {}  
    	edge from parent 
	node[ellipse, below, draw=none] {}
	node[ellipse, above, draw=none]  {}  
}
;
\end{scope}
\begin{scope}[shift={(3,-2)}, scale=.7]
\node[inner sep=2pt] (Root)[] {}
    child {
    node[fill] {}      child {
    node[fill] {}  
    	edge from parent 
	node[ellipse, below, draw=none] {}
	node[ellipse, above, draw=none]  {}  
}
    	edge from parent 
	node[ellipse, below, draw=none] {}
	node[ellipse, above, draw=none]  {}  
}
   child {
    node[fill] {}      child {
    node[fill] {}  
    	edge from parent 
	node[ellipse, below, draw=none] {}
	node[ellipse, above, draw=none]  {}  
}
    	edge from parent 
	node[ellipse, below, draw=none] {}
	node[ellipse, above, draw=none]  {}  
}
;
\end{scope}
\draw [ultra thick] (.3,.3)--(1.5,1.5);
\draw [ultra thick] (5.7,.3)--(4.5,1.5);
\draw [ultra thick] (.3,-1.3)--(1.5,-2.5);
\draw [ultra thick] (5.7,-1.3)--(4.5,-2.5);
\end{tikzpicture}}
\caption{$\mathcal{G}_P$ for $P=A\overline{A}A\overline{A}B\overline{B}B\overline{B}$}
\end{subfigure}
\begin{subfigure}[b]{.35 \textwidth}
\centering
\scalebox{.6}{
  \begin{tikzpicture}[sloped, level distance=1cm,
level 1/.style={sibling distance=2cm},
level 2/.style={sibling distance=1cm}]
\tikzstyle{every node}=[circle, draw, scale=.8, inner sep=2pt]
\vspace{10pt}
\begin{scope}[scale=.7]
\node[inner sep=2pt] (Root)[] {}
    child {
    node[fill] {} 
    child {
    node[fill] {}
     child {
    node[fill] {}
    	edge from parent 
	node[ellipse, below, draw=none] {}
	node[ellipse, above, draw=none]  {}  
}
   	edge from parent 
	node[ellipse, below, draw=none] {}
	node[ellipse, above, draw=none]  {}  
}
    	edge from parent 
	node[ellipse, above, draw=none] {}
	node[ellipse, below, draw=none]  {}  
}
    child {
    node[fill] {}  
    	edge from parent 
	node[ellipse, below, draw=none] {}
	node[ellipse, above, draw=none]  {}  
};
\end{scope}
\begin{scope}[shift={(6,0)}, scale=.7]
\node[inner sep=2pt] (Root)[] {}
    child {
    node[fill] {}  
    	edge from parent 
	node[ellipse, below, draw=none] {}
	node[ellipse, above, draw=none]  {}  
}
    child {
    node[fill] {} 
    child {
    node[fill] {}
    child {
    node[fill] {}
    	edge from parent 
	node[ellipse, below, draw=none] {}
	node[ellipse, above, draw=none]  {}  
}
    	edge from parent 
	node[ellipse, below, draw=none] {}
	node[ellipse, above, draw=none]  {}  
}
    	edge from parent 
	node[ellipse, above, draw=none] {}
	node[ellipse, below, draw=none]  {}  
}
;
\end{scope}
\begin{scope}[shift={(3,3)}, scale=.7]
\node[inner sep=2pt] (Root)[] {}
    child {
    node[fill] {}  
    	edge from parent 
	node[ellipse, below, draw=none] {}
	node[ellipse, above, draw=none]  {}  
}
    child {
    node[fill] {}  
    	edge from parent 
	node[ellipse, below, draw=none] {}
	node[ellipse, above, draw=none]  {}  
}
    child {
    node[fill] {}  
    	edge from parent 
	node[ellipse, below, draw=none] {}
	node[ellipse, above, draw=none]  {}  
}
    child {
    node[fill] {}  
    	edge from parent 
	node[ellipse, below, draw=none] {}
	node[ellipse, above, draw=none]  {}  
}
;
\end{scope}
\draw [  ultra thick] (.3,.3)--(1.5,1.5);
\draw [  ultra thick] (5.7,.3)--(4.5,1.5);
\end{tikzpicture}}
\caption{$\mathcal{G}_Q$ for $Q=A\overline{A}B\overline{B}A\overline{A}B\overline{B}$}
\end{subfigure}
\caption{Graphs of valid trees.} 
\end{figure}
}

\section{Graph of valid plane trees: Global Structure}

Given a primary {sequence} $P$ this section considers the overall structure of the graph $\mathcal{G}_P$ of $P$-valid plane trees.  We give an algorithm that produces a $P$-valid plane tree from $P$ exactly when the set $\mathcal{V}(P)$ is nonempty.  The algorithm is greedy in the sense that it matches letters in $P$ opportunistically as it reads through the sequence.  Theorem \ref{theorem: graph is connected} shows that the graph $\mathcal{G}_P$ is connected.  Define a graph $\mathcal{G}_P^+$ by directing each edge in $\mathcal{G}_P$ consistent with its type 2 move.   Corollary \ref{cor: unique sink} proves that $\mathcal{G}_P^+$ has a unique sink. This 2-sink is characterized in a number of different combinatorial ways in Corollary \ref{cor: characterize T_0}, including as the tree produced by the greedy algorithm. 

\revised{
\begin{remark}\label{remark: bijection to NCMs}
We streamline proofs in this section by using the well-know bijection between plane trees with $n$ edges and non-crossing matchings on $2n$ letters that sends each edge $e(i,j)$ in the plane tree to the ordered pair $(i,j)$ in the matching.   (The inverse of this map is obtained by writing the half-edges of a plane tree counterclockwise from the root; in each pair $(i,j)$ the index $i$ corresponds to a left-half-edge and the index $j$ corresponds to a right half-edge.) Note that this map respects the poset under inclusion for plane trees and noncrossing matchings, in the sense that subtrees correspond to submatchings and vice versa. The reader interested in more is referred to {\cite[Section 4.4.2]{Bona-Handbook}}.
\end{remark}}

By extension of the classical bijection between plane trees and noncrossing matchings, every $P$-valid plane tree $S$ induces a perfect matching on the letters $p_1p_2\cdots p_{2n}=P$ of the word $P$.  If $p_i$ and $p_j$ label the two half-edges of a single edge in $S$ then we  refer to $p_i$ and $p_j$ as a {\em pair} in the matching. 

We now give
 the algorithm, which we describe in the language of stacks.
\revised{
\begin{definition}[The greedy algorithm]
\label{def: greedy algorithm}
 Given a primary structure $P=p_1p_2\cdots p_{2n}$ perform the following.
 \begin{itemize}
\item Push the first letter $p_1$.
\item For each subsequent letter $p_i$
\begin{itemize}
\item Peek at the stack.  
\item If the letter on top of the stack is the complement of $p_i$ pop it.
\item If not push $p_i$.  
\end{itemize}
\end{itemize}
The greedy algorithm outputs a (possibly empty) stack and also a collection of ordered pairs  
$$E(P)=\{(i,j) : \text{$p_i$ is popped at the $j^{th}$ step of the algorithm} \} .$$ 
\end{definition}}

From the biological perspective, RNA molecules do not wait patiently to be fully formed before starting to fold. As they grow longer RNA molecules begin to refold into shapes that minimize free energy   \cite{beyond-energy-minimization}.  The greedy algorithm models the first folds, before the effects of energy-minimization dominate.

The output of the greedy algorithm satisfies several properties that are reminiscent of Proposition \ref{proposition: properties of plane trees} and Proposition \ref{proposition: valid subtrees}.  \rerevised{It creates a partial matching on each input sequence $P$.  In fact, if the algorithm terminates with an empty stack then it created a perfect matching which is moreover a $P$-valid plane tree denoted $T_0(P)$.  The following lemma is the main step in our proof that the greedy algorithm always produces a valid plane tree if any $P$-valid plane trees exist.}
\revised{
\begin{figure}[H]
\centering
\begin{subfigure}[b]{0.4\textwidth}

\begin{tikzpicture}[level distance=1cm,
level 1/.style={sibling distance=2cm},
level 2/.style={sibling distance=1cm}]
\tikzstyle{every node}=[circle, draw, scale=.8, inner sep=2pt]
\vspace{10pt}
\node (Root)[] {}
    child {
    node[fill] {}
    child { node[fill] {} 
	edge from parent 
	node[left, draw=none] {$B$}
	node[right, draw=none]  {$\overline{B}$}    
    }
	edge from parent 
	node[above, draw=none] {$B$}
	node[below, draw=none]  {$\overline{B}$}
}
    child{ node[fill] {} 
	edge from parent 
	node[left, draw=none] {${A}$}
	node[right, draw=none]  {$\overline{A}$}    
    }
    child{ node[fill] {} 
	edge from parent 
	node[below, draw=none] {$B$}
	node[above, draw=none]  {$\overline{B}$}    
    };
\end{tikzpicture}
\end{subfigure}
\begin{subfigure}[b]{0.4\textwidth}
\centering
\begin{tikzpicture}[level distance=1cm,
level 1/.style={sibling distance=2cm},
level 2/.style={sibling distance=2cm}]
\tikzstyle{every node}=[circle, draw, scale=.8, inner sep=2pt]
\vspace{10pt}
\node (Root)[] {}
    child {
    node[fill] {}
    child { node[fill] {} 
	edge from parent 
	node[above, draw=none] {$B$}
	node[below, draw=none]  {$\overline{B}$}    
    }
    child{ 
       node[fill] {} 
 child{ node[fill] {} 
	edge from parent 
	node[left, draw=none] {${A}$}
	node[right, draw=none]  {$\overline{A}$}    
    }
	edge from parent 
	node[above, draw=none] {$B$}
	node[below, draw=none]  {$\overline{B}$}    
}
	edge from parent 
	node[left, draw=none] {$B$}
	node[right, draw=none]  {$\overline{B}$}};
\end{tikzpicture}
\end{subfigure}
\caption{Both plane trees are valid for $P=BB\overline{B} \overline{B}A \overline{A}B \overline{B}$ but the one on the left is $T_0(P)$.}
\end{figure}
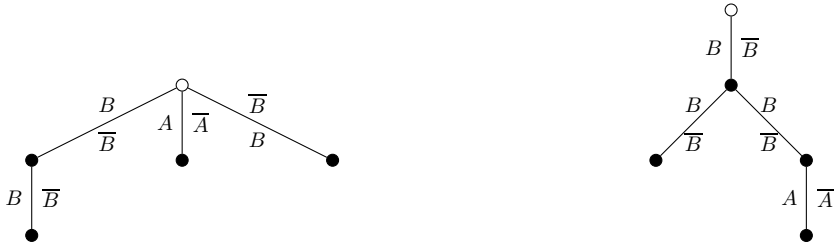}

\rerevised{\begin{lemma}\label{lemma: properties of greedy output}
Suppose that when the greedy algorithm is applied to the sequence  $P=p_1p_2\cdots p_{2n}$ it pops $p_i$ at the $j^{th}$ step.
Let $P'$ be the sequence obtained by removing the letters ${\color{black}p_i}p_{i
+1}p_{i+2}\cdots p_{j-1}{\color{black}p_j}$ from $P$ and retaining (for the sake of notational convenience) the same indexing.  Then implementing the greedy algorithm on $P'$ results in the same stack as $P$ and moreover $E(P)$ is the disjoint union
\[E(P) = E(P') \bigcup M\]
where $M$ is a perfect matching on the set $\{{\color{black}i,}i+1,i+2,\ldots,j-1{\color{black},j}\}$.
%\item (Noncrossing property) No pairs $(i,j)$ and $(i',j')$ in $E(P)$ have $i<i'<j<j'$.
%\item (Termination with empty stack produces a tree) \label{lemma: greedy trees}
%If after $2n$ steps the greedy algorithm terminates with an empty stack, 
%the set $E(P)$ forms a noncrossing matching that corresponds via the standard bijection of Remark \ref{remark: bijection to NCMs} to a $P$-valid plane tree. We call this {\em the $P$-valid plane tree produced by the greedy algorithm} and denote it $T_0(P)$ or $T_0$ if $P$ is clear. 
\end{lemma}}

\begin{proof}
If $p_i$ is popped at the $j^{th}$ step then it must be at the top of the stack.  Thus any of the letters $p_{i+1},p_{i+2},\ldots,p_{j-1}$ that were pushed must also have been popped before the $j^{th}$ step.  Since every symbol is popped at most once, none of $p_1, p_2, \ldots, p_{i-1}$ were popped in steps $i+1,i+2,\ldots,j-1$.  This means the subset of $p_{i+1},p_{i+2},\ldots,p_{j-1}$ that were pushed onto the stack is bijective with the number of steps in $\{i+1,i+2,\ldots,j-1\}$ at which the stack was popped, or equivalently $E(P)$ contains a perfect matching of $\{i+1,i+2,\ldots,j-1\}$.  It follows that the stack at the $j+1^{th}$ step on input $P$ is the same as the stack at the $i-1^{th}$ step of input $P$.  Since these are the same as the stack at the $i-1^{th}$ step of $P'$ and since $P$ and $P'$ share the same $2n-j$ last letters, the greedy algorithm produces the same output (pairs and stack) on $P'$ as on $P$ outside of $\{{\color{black}i, }i+1,i+2,\ldots,j-1{\color{black}, j}\}$.
%The second claim is a special case of the previous.
%
%To prove the third, note that the stack is empty and each $p_i$ on the stack is popped exactly once so the set $E(P)$ forms a perfect matching of $\{1,2,\ldots,2n\}$.  The pairs in $E(P)$ are non-crossing by Part (2).  Hence the map of Remark \ref{remark: bijection to NCMs} sends $E(P)$ to a plane tree.  The plane tree is $P$-valid since $(i,j) \in E(P)$ only if $p_i$ and $p_j$ are complements, by construction of the algorithm. 
\end{proof}

Note that $i$ labels a left half-edge in the tree $T_0(P)$ if and only if $p_i$ was pushed onto the stack in the greedy algorithm.

In fact if a primary structure has any valid plane tree, then the greedy algorithm will terminate with an empty stack and thus produce a valid plane tree.  The next theorem proves this claim using {a proof by minimal counterexample.}% induction on the length of the primary structure. 

\revised{
\begin{theorem}
Let $P$ be a primary structure.  If $\mathcal{V}(P)$ is nonempty then the greedy algorithm produces a valid plane tree $T_0\in \mathcal{V}(P)$. 
\end{theorem}
\begin{proof}
If $n=1$ then for all sequences of length $2n=2$ that have a valid plane tree, the greedy algorithm produces the (unique) plane tree.

Consider the collection of primary sequences $P$ such that a $P$-valid plane tree exists but the greedy algorithm terminates with a nonempty stack.  Let $Q=q_1q_2\cdots q_{2k}$ be such a sequence of shortest length, i.e. $Q$ is a sequence of length $2k$ and for all valid sequences of length $2\ell$ where $\ell<k$ the greedy algorithm terminates with an empty stack.

We first claim that the greedy algorithm pops at least once.  Indeed there is at least one $Q$-valid plane tree and that tree must have at least one leaf, say edge $e(i,i+1)$.  This means that $q_i = \overline{q_{i+1}}$ and so the greedy algorithm pops at the $i+1^{th}$ step if not before. 

We use this fact to produce a shorter sequence than $Q$ for which a valid plane tree exists but the greedy algorithm terminates without an empty stack.

Suppose that the algorithm pops first at the $j+1^{th}$ step.  Then it must have popped $q_j$ off the stack.  By Lemma \ref{lemma: properties of greedy output} the greedy algorithm has the same stack when run on the sequence 
\[Q' = q_1q_2 \cdots q_{j-2}q_{j-1}q_{j+2}q_{j+3}\cdots q_{2n}\] 
as on $Q$.  If $e(j,j+1)$ is a leaf in one of the Q-valid plane trees $T$ then plucking $e(j,j+1)$ off of $T$ results in a $Q'$-valid plane tree.  This \rerevised{would then contradict} our hypothesis that $Q$ was the minimal such sequence.

It remains to be shown that $e(j,j+1)$ is indeed a leaf in one of the $Q$-valid plane trees.  Suppose not and consider an arbitrary $T\in \mathcal{V}(Q)$.  Consider the edges containing the $j$ and $j+1^{th}$ half-edges.  They are adjacent edges that have complementary labels since the $j+1^{th}$ step of the greedy algorithm pops $q_j$.  By either a type 1 or type 2 move, the tree $T$ can be transformed into a new valid plane tree $T'$ in $\mathcal{V}(Q)$ containing the leaf $e(j,j+1)$ as desired.
\end{proof}}

We can say more about the valid plane tree that the greedy algorithm produces.

\begin{corollary}\label{corollary: $T_0$ has no type 2}
The greedy algorithm produces a valid plane tree with no valid local moves of type 2.
\end{corollary}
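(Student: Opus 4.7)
The plan is to argue by contradiction.  Suppose $T_0$ admits a valid type 2 local move on adjacent edges $e(i,j')$ and $e(j,i')$ with $i<j<i'<j'$.  By Definition \ref{def: local moves}, this move replaces those edges with $e(i,j)$ and $e(i',j')$, and validity of the new edges requires in particular that $p_i$ and $p_j$ form a complementary pair.  I will show that this forces the greedy algorithm to have produced the edge $e(i,j)$ rather than $e(i,j')$, contradicting the construction of $T_0$.

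The key intermediate claim is that when the greedy algorithm processes position $j$, the most recent unmatched letter is $p_i$.  Since $e(i,j')$ and $e(j,i')$ are adjacent and $e(j,i')$ is nested inside $e(i,j')$, the edge $e(j,i')$ must be a child of the lower endpoint of $e(i,j')$.  Consequently the half-edges $i+1, i+2, \ldots, j-1$ are partitioned among the subtrees rooted at the sibling children that precede $e(j,i')$ in counterclockwise order.  Applying Lemma \ref{lemma: properties of greedy output} to each such sibling edge shows these sibling subtrees are complete, so every position in $i+1, \ldots, j-1$ is matched to another position in the same range by step $j$.  Since $p_i$ is matched to $p_{j'}$ in $T_0$ with $j'>j$, position $i$ remains unmatched at step $j$ and is therefore the most recent unmatched letter at that step.

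The contradiction is now immediate: because $p_i$ and $p_j$ are complementary, the greedy algorithm at step $j$ would form the edge $e(i,j)$, yet $T_0$ instead contains $e(i,j')$.  This rules out any valid type 2 move on $T_0$.

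I expect the main obstacle to be the bookkeeping in the intermediate claim, specifically the verification that every position strictly between $i$ and $j$ is already matched to something before position $j$.  This hinges on the adjacency hypothesis (which pins $e(j,i')$ as a child of the lower endpoint of $e(i,j')$ rather than a deeper descendant) together with Lemma \ref{lemma: properties of greedy output}, which recognizes each earlier sibling subtree as self-contained.  Once that tiling of $[i+1,j-1]$ by complete sibling subtrees is established, the rest of the proof is automatic.
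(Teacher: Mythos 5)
Your proof is correct and follows essentially the same route as the paper's: both argue that a valid type 2 move forces the two left half-edges $i$ and $j$ at the shared vertex to be complementary with $i$ still unmatched when the greedy algorithm reaches $j$, so greediness would have produced $e(i,j)$ instead. In fact your write-up is slightly more careful than the paper's, since you explicitly justify via Lemma \ref{lemma: properties of greedy output} that the earlier sibling subtrees tile $[i+1,j-1]$ and hence that $p_i$ really is the most recent unmatched letter at step $j$, a point the paper asserts more tersely.
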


\begin{proof}
%Consider the first valid local move of type 2 if you start at the root of $T_0$ and work \eliz{counter}clockwise through the half-edges. This local move involves two adjacent edges $e(i,j)$ and $e(i',j')$ in $T_0$ such that $i<i'<j'<j$, \eliz{and no half-edge $k<i$ can be involved in valid local move of type 2}. If the half-edge $i$ has label $B$ then the half-edges $j$ and $i'$ must both be labeled by $\overline{B}$ while half-edge $j'$ is labeled $B$.  At the $j^{th}$ step of the greedy algorithm $p_{i'}$ is higher on the stack than $p_i$ and thus $p_{i'}$ must be popped first, forming $e(i',j)$. By contradiction we conclude $T_0$ has no valid local moves of type 2.

Consider a valid local move of type 2 in $T_0$. This local move involves two adjacent edges $e(i,j')$ and $e(j,i')$ in $T_0$ such that $i<j<i'<j'$. If the half-edge $i$ has label $B$ then the half-edges $j$ and $j'$ must both be labeled by $\overline{B}$ while half-edge $i'$ is labeled $B$.  At the $j^{th}$ step of the greedy algorithm $p_{i}$ is on top of the stack (regardless of whether there are other edges between $i$ and $j$ by Proposition \ref{proposition: valid subtrees}).  So $p_{i}$ must be popped forming $e(i,j)$. By contradiction we conclude $T_0$ has no valid local moves of type 2.
\end{proof}

In fact more is true: the tree $T_0$ produced by the greedy algorithm is the only $P$-valid plane tree  with no valid local moves of type 2.  The proof walks through the definitions, particularly using the non-crossing condition of half-edges in plane trees.

%%%%%%%%%%%%%%%%%possibly reincorporate some of this paragraph%%%%%%%%%%%%
%A valid local move of type 1 requires that a substring of the primary sequence be $B w \overline{B}$ where $B$ and $\overline{B}$ are the nucleotides on the left-half-edges of the 2-path for the local move and where $w$ is some word in the nucleotides that corresponds to a plane subtree (fully matched, etc.).  But if the greedy algorithm is at the terminal vertex $v$ of a left-half-edge labeled $B$ and reads $\overline{B}$ then it will write $\overline{B}$ on the right-half-edge---not send it up another left-half-edge.  So can't create valid local moves of type 1.

\begin{theorem}\label{theorem: non-greedy trees have type 2}
Suppose $T$ is a $P$-valid plane tree that is not $T_0$.  Then $T$ has a valid local move of type 2.
\end{theorem}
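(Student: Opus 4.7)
The plan is to locate in $T$ two adjacent, nested edges $e(j_0,m)$ and $e(i,k)$ with $j_0<i<k<m$ whose endpoints satisfy $p_{j_0}=\overline{p_i}$ and $p_k=\overline{p_m}$; the type 2 move replacing them by $e(j_0,i)$ and $e(k,m)$ will then be valid.

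First, I would observe that a plane tree is determined by its set of \emph{left-halves} (the smaller endpoint of each edge), because the non-crossing condition forces every right-half to pair with the most recent still-open left-half. Since $T\neq T_0$, the sets $L(T)$ and $L(T_0)$ differ; let $i=\min\bigl(L(T)\triangle L(T_0)\bigr)$ and let $U$ denote the set of open left-halves just before step $i$. Because roles agree on half-edges $1,\dots,i-1$, the set $U$ is the same in both trees, and I set $j_0=\max U$. I claim $i\in L(T)\setminus L(T_0)$: otherwise $i$ is a right-half in $T$, so non-crossing forces $i$'s partner to be $j_0$, making $p_{j_0}$ and $p_i$ complementary; but then the greedy algorithm would also have matched $j_0$ with $i$, contradicting $i\in L(T_0)$.

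Next, since $i$ is a right-half in $T_0$, the edge $e(j_0,i)$ lies in $T_0$ and so $p_{j_0}=\overline{p_i}$. In $T$, both $i$ and $j_0$ are left-halves with partners $k,m>i$, and non-crossing gives $j_0<i<k<m$, so $e(j_0,m)$ and $e(i,k)$ both belong to $T$. The half-edges $j_0+1,\dots,i-1$ are not in $U$, so they match among themselves in both trees; in particular no edge of $T$ has left-half in $\{j_0+1,\dots,i-1\}$ and right-half $\geq i$. This forces $e(i,k)$ to be a direct child of $e(j_0,m)$ in $T$, so the two edges are adjacent. Combining $e(j_0,m),e(i,k)\in T$ and $e(j_0,i)\in T_0$ forces $p_{j_0}=\overline{p_m}$, $p_i=\overline{p_k}$, and $p_{j_0}=\overline{p_i}$, which together yield $p_k=\overline{p_m}$; hence the type 2 move on $e(j_0,m)$ and $e(i,k)$ is valid.

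The step I expect to require the most care is the adjacency claim: ruling out any edge of $T$ nested strictly between $e(j_0,m)$ and $e(i,k)$. This uses the agreement of half-edge roles below $i$ to pin down the matching structure inside the interval $(j_0,i)$, combined with the non-crossing condition in $T$ to forbid intruding edges on the outer side of $e(i,k)$.
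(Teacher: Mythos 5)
Your proof is correct and follows essentially the same route as the paper's: both locate the first half-edge $i$ at which $T$ deviates from the greedy tree $T_0$, identify the most recent still-open half-edge $j_0$ (the paper's $j$), show that $i$ and $j_0$ are left-halves of adjacent nested edges of $T$ with complementary labels, and apply the type 2 move there. Your derivation of adjacency directly from $j_0=\max U$ and the non-crossing condition is a slightly slicker packaging of the paper's leaf-removal induction, but the argument is the same in substance.
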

\revised{
\begin{proof}
\rerevised{Consider a $P$-valid plane tree $T$ that is not $T_0$. They differ in some edge, so let $j$ denote the first half-edge in which they differ, i.e. $p_j$ labels a left half-edge in one of $T$ or $T_0$ but a right half-edge in the other.}  We first show that $p_j$ must label a right half-edge in $T_0$.  If not it labels a right half-edge in $T$ which we call $e(i',j)$.  The half-edges $i'+1,\ldots,j-1$ form a subtree of $T$ by Proposition \ref{proposition: valid subtrees}.  This same subtree is in both $T$ and $T_0$ by hypothesis that $p_j$ is the first label where they differ.  Thus $p_{i'}$ is on top of the stack at step $j$ in the greedy algorithm and so $e(i',j)$ is in $T_0$ contradicting our hypothesis.

So $p_j$ labels a right half-edge in $T_0$ say of the edge $e(i,j)$.  Then
%such that the edge $e(i,j)$ is in $T_0$ but not in $T$.  {\color{red} I keep messing around with this.  First right half-edges that disagree?  Correct the hypothesis below, which is I think not true.\color{blue} I am ok with the current wording, will keep thinking about it.} We will show:
\begin{itemize}
\item $i$ and $j$ are both left half-edges in $T$ by hypothesis
\item that have complementary labels, say $B$ and $\overline{B}$ respectively, because $T_0$ is $P$-valid and
\item the half-edges labeled ${i+1}, {i+2}, \ldots, {j-1}$ form a subtree in $T_0$ by Proposition \ref{proposition: valid subtrees} and thus in $T$ by hypothesis.
\end{itemize}
%The last two claims are immediate: the second reflects the fact that the greedy algorithm could pop $p_i$ at the $j^{th}$ step and the third follows from Part (2) of Lemma \ref{lemma: greedy trees} together with {\color{red} the hypothesis that $T$ and $T_0$ agree up to the $j^{th}$ half-edge}. Since $p_i$ was pushed by the greedy algorithm, we know $i$ is a left half-edge in $T_0(P)$ and therefore also a left half-edge in $T$.  We now show that $j$ is a left half-edge in $T$ as well.  By hypothesis $i$ forms an edge $e(i,i')$ with $i'>j$ in $T$ and so $j$ cannot form a right half-edge with any $j'<i$ without violating the noncrossing property.  Of course $e(i,j)$ is not in $T$ by assumption.  Therefore $j$ is a left half-edge in $T$ and $T$ has the form given in Figure \ref{figure: have local move}:
\begin{figure}[H]
\begin{center}
\centering
  \begin{tikzpicture}[level distance=2cm,
level 1/.style={sibling distance=2cm},
level 2/.style={sibling distance=1cm}]
\node [left, ellipse, draw, dashed] at (0,-2) {\begin{tabular}{c} matches $T_0$\end{tabular}};
\node [ellipse, draw, dashed] at (0,.5) {left half-edges match $T_0$};
\tikzstyle{every node}=[circle, draw, scale=.8, inner sep=2pt]
\vspace{10pt}
\node (Root)[fill] {}
    child {
    node[fill] {}
    child { node[fill] {} 
	edge from parent 
	node[left, draw=none] {$p_j=B$}
	node[right, draw=none]  {$\overline{B}$}    
    }
	edge from parent 
	node[left, draw=none] {$p_{i}=\overline{B}$}
	node[right, draw=none]  {$B$}
};

     \node at (0,1){};
     \draw [dashed] (.5,-2) circle [radius=0.5];
          \draw [dashed] (0,-4.5) circle [radius=0.5];
\end{tikzpicture}
\end{center}
\caption{If $T$ differs from $T_0$ then $T$ has a type-$2$ local move.}\label{figure: have local move}
\end{figure}
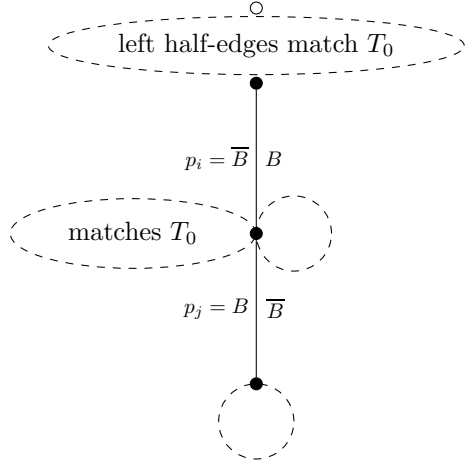
This means that $T$ looks like the schematic of Figure \ref{figure: have local move} and so admits a local move of type 2, as desired.
\end{proof}
}
We now conclude that in fact the graph whose vertices are valid plane trees and whose edges are local moves is connected.

\begin{theorem}\label{theorem: graph is connected}
Fix a primary structure $P$.  The graph $\mathcal{G}_P$ is connected.
\end{theorem}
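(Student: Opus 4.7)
My plan is to show that every $P$-valid plane tree is connected to the greedy tree $T_0$ via a sequence of valid local moves; connectedness of $\mathcal{G}_P$ then follows immediately.

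First I would introduce the monovariant $\phi(T) = \sum_{e(a,b)\in T}(b-a)$. A direct calculation shows that a valid type 2 move (replacing $e(i,j')$ and $e(j,i')$ with $e(i,j)$ and $e(i',j')$, where $i<j<i'<j'$) changes $\phi$ by $2(j-i')<0$, so type 2 moves strictly decrease $\phi$. Since $\phi$ is a nonnegative integer, every sequence of type 2 moves applied to a $P$-valid plane tree must terminate. The heart of the proof is the following key claim, which I would establish next: \emph{if $T$ is a $P$-valid plane tree with $T\ne T_0$, then $T$ admits at least one valid type 2 move.} Granting this, starting from any $T$ and iterating valid type 2 moves reaches a terminal tree $T^*$ with no valid type 2 moves; by the contrapositive of the claim, $T^*=T_0$. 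Hence every $P$-valid plane tree is connected to $T_0$, and $\mathcal{G}_P$ is connected.

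To prove the key claim, I would fix the smallest position $i$ at which $T$ diverges from the greedy algorithm, meaning: greedy would match $i$ with its most recent open half-edge $j$ (because $p_j\sim p_i$) while $T$ instead leaves $j$ open and starts a new edge at $i$. A short induction on steps using the validity of $T$ and the non-crossing condition shows that $T$ and greedy agree on every open/close decision at each step $h<i$, so in particular they have the same set of open half-edges at step $i$, and $j$ really is the most recent open half-edge in $T$ too. Since $T$ opens at $i$, it contains edges $e(i,k')$ and $e(j,k)$ for some $k,k'>i$; non-crossing forces $j<i<k'<k$, placing $e(i,k')$ inside $e(j,k)$.

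The main technical hurdle---and the part I expect to require the most care---is verifying that $e(i,k')$ is a \emph{direct} child of $e(j,k)$ in $T$, so that the pair is adjacent in the sense required by a type 2 move. This reduces to showing that every half-edge in $\{j+1,\ldots,i-1\}$ pairs within that range in $T$: a pairing with anything $\le j$ would cross $e(j,k)$, while a pairing with anything $\ge i$ would either cross $e(i,k')$ or contradict that $j$ is the most recent open at step $i$ in $T$. Once adjacency is established, the type 2 move on $e(j,k)$ and $e(i,k')$ produces $e(j,i)$ and $e(k',k)$, and validity follows from $p_j\sim p_i$ (by the choice of $i$) together with $p_{k'}=\overline{p_i}=p_j$ and $p_k=\overline{p_j}=p_i$, which give $p_{k'}\sim p_k$. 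This completes the proof of the key claim and hence of the theorem.
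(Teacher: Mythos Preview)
Your argument is correct, but it follows a genuinely different route from the paper. The paper proves connectedness directly by induction on the length of $P$: given two $P$-valid trees $S$ and $T$, if they share an edge one deletes it and applies the inductive hypothesis to the two resulting pieces; if they share no edge, one takes a leaf $e(i,i+1)$ of $S$, observes that the half-edges $i,i+1$ in $T$ sit in one of three local configurations, and performs a single valid local move on $T$ to create that leaf, reducing to the shared-edge case. By contrast, you route every tree to the greedy tree $T_0$ using only type~2 moves, with the monovariant $\phi(T)=\sum_{e(a,b)}(b-a)$ guaranteeing termination and the key claim ``$T\ne T_0\Rightarrow T$ has a valid type~2 move'' supplying progress. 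That key claim is exactly what the paper proves later as a separate theorem (Theorem~\ref{theorem: non-greedy trees have type 2}), using essentially the same first-divergence analysis you sketch. So you have effectively reordered the paper's logic: you prove Theorem~\ref{theorem: non-greedy trees have type 2} first and derive connectedness from it, whereas the paper proves connectedness independently and only afterwards establishes the type~2 result. Your approach is arguably more economical, since it yields connectedness, the unique-sink property, and the existence of type~2 paths to $T_0$ in a single stroke; the paper's inductive proof, on the other hand, is self-contained and does not rely on the greedy algorithm at all.

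One small point worth tightening: when you fix the first divergence position $i$, you assert that the divergence necessarily takes the form ``greedy closes with $j$, but $T$ opens at $i$.'' You should explicitly rule out the reverse (greedy opens at $i$ but $T$ closes). This is quick: if $T$ closed $i$ with some open $j'$, non-crossing forces $j'$ to be the most recent open half-edge $j$, but greedy opened at $i$ precisely because $p_j\not\sim p_i$, contradicting validity of $e(j,i)$ in $T$.
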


\begin{proof} 
We induct on the {length} of the sequence $P$.  The claim is trivial when $P$ has $2$ letters since in that case there is a unique valid plane tree.  Suppose that for every primary structure with at most $2(n-1)$ letters, the graph  whose vertices are valid plane trees and whose edges are local moves is connected.  Fix a primary structure $P$ with $n$ letters and consider two distinct $P$-valid plane trees $S$ and $T$.  If $S$ and $T$ share no common edges then Proposition \ref{proposition: sharing an edge} asserts that there is a valid local move on one to a $P$-valid plane tree sharing an edge with the other.  Without loss of generality assume that $S$ and $T$ both contain the edge $e(i,j)$.

Now delete the common edge $e(i,j)$.  Create the subtrees $S'$ and $T'$ induced from $S$ and $T$ respectively from the edges farther from the root than $e(i,j)$ (namely those indexed $e(i',j')$ for $i<i', j'<j$), and create subtrees $S''$ and $T''$ respectively from the edges closer to the root than $e(i,j)$ (namely those indexed $e(i',j')$ for $i'<i$ and $j'>j$).  By Proposition \ref{proposition: valid subtrees} we know $S'$ and $T'$ are $P'$-valid plane trees for the subsequence $P'$ of $P$ consisting of the letters in positions $i'$ for $i'<i$ or $i'>j$.  Similarly $S''$ and $T''$ are $P''$-valid plane trees, where $P''$ is the subsequence of $P$ consisting of the letters in positions $i'$ for $i<i'<j$.  By induction there are valid local moves taking $S'$ to $T'$ and $S''$ to $T''$.  Implement the same local moves on $S$ to get valid local moves that take $S$ to $T$.  This proves the claim.
%[[cross-ref earlier result]]
% By the previous case there is a sequence of valid local moves between $S$ and $T'$ and so there is a sequence of valid local moves between $S$ and $T$ as well.  This proves the claim.
%Further analysis shows that there is a valid local move from $T$ to $T'$ where $T'$ is a $P$-valid plane tree with the edge $e(i,i+1)$.  This brings us back to the previous case, and thus proves the claim.
\end{proof}

{Furthermore} if the edges of $\mathcal{G}_P$ are directed and correspond to type $2$ local moves, then the graph has a unique sink $T_0$.%consider the graph $\mathcal{G}_P^+$ in which each edge is directed consistently with local moves of type 2.  Then $\mathcal{G}_P^+$ has a unique sink, and that sink corresponds to the valid plane tree $T_0$.

\begin{corollary}
\label{cor: unique sink}
Fix a primary sequence $P$ with {at least one} valid plane tree and let $T_0$ be the valid plane tree for $P$ produced by the greedy algorithm.  Let $\mathcal{G}_P^+$ denote the graph whose vertices are valid plane trees for $P$ and with a directed edge $T \rightarrow T'$ if there is a local move of type 2 from $T$ to $T'$.

The valid plane tree $T_0$ is the unique sink in the directed graph $\mathcal{G}_P^+$.
\end{corollary}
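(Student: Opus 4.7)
The plan is to deduce Corollary \ref{cor: unique sink} as an immediate consequence of Corollary \ref{corollary: $T_0$ has no type 2} and Theorem \ref{theorem: non-greedy trees have type 2}, once the definition of a sink is unpacked. By definition a sink in the directed graph $\mathcal{G}_P^+$ is a vertex with no outgoing edges, which under the orientation rule (each edge of $\mathcal{G}_P$ oriented in the direction of its type-2 move) translates exactly to a $P$-valid plane tree admitting no valid local move of type 2. So the statement reduces to: $T_0$ is the unique $P$-valid plane tree with no valid type-2 move.

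I would then carry out two short verifications. First, to see that $T_0$ is a sink: the hypothesis that $P$ admits some valid plane tree, together with the existence theorem preceding Corollary \ref{corollary: $T_0$ has no type 2}, guarantees that $T_0$ is itself a $P$-valid plane tree, and Corollary \ref{corollary: $T_0$ has no type 2} states directly that $T_0$ admits no valid local move of type 2, so $T_0$ has no outgoing edges in $\mathcal{G}_P^+$. Second, for uniqueness: given any $P$-valid plane tree $T$ with $T \neq T_0$, Theorem \ref{theorem: non-greedy trees have type 2} produces an explicit valid local move of type 2 on $T$, i.e.\ an outgoing edge from $T$ in $\mathcal{G}_P^+$, so $T$ is not a sink. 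Combining the two verifications, $T_0$ is the unique sink of $\mathcal{G}_P^+$.

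There is essentially no obstacle here; the substantive work is already contained in Theorem \ref{theorem: non-greedy trees have type 2}, whose proof locates the first half-edge on which $T$ disagrees with the greedy matching, uses the noncrossing condition and parity to confine the intervening half-edges to a subtree whose root is the common incidence vertex of the two disagreeing left half-edges, and thereby exhibits two edges on which a type-2 move is valid. The present corollary is just the graph-theoretic repackaging of that statement together with Corollary \ref{corollary: $T_0$ has no type 2}, so the proof should be only a few lines long and proceed in exactly the order above.
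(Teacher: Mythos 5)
Your proposal is correct and matches the paper's own proof exactly: both cite Corollary \ref{corollary: $T_0$ has no type 2} to show $T_0$ is a sink and Theorem \ref{theorem: non-greedy trees have type 2} to rule out any other sink. No further comment is needed.
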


\begin{proof}
Corollary \ref{corollary: $T_0$ has no type 2} proved that $T_0$ has no edges directed out  so $T_0$ is a sink in $\mathcal{G}_P^+$.  Theorem \ref{theorem: non-greedy trees have type 2} proved that every other tree has at least one edge directed out so there are no other sinks in the graph $\mathcal{G}_P^+$.  
\end{proof}
\begin{remark}
The graph $\mathcal{G}_P^+$ may have several sources.  For instance  the graph on the sequence $A\overline{A}B\overline{B}A\overline{A}B\overline{B}$ has two sources and one sink. 
\begin{figure}[H]
\centering
\scalebox{.7}{
  \begin{tikzpicture}[sloped, level distance=1cm,
level 1/.style={sibling distance=2cm},
level 2/.style={sibling distance=1cm}]
\tikzstyle{every node}=[circle, draw, scale=.8, inner sep=2pt]
\vspace{10pt}
\begin{scope}[scale=.7]
\node[inner sep=2pt] (Root)[] {}
    child {
    node[fill] {} 
    child {
    node[fill] {}
     child {
    node[fill] {}
    	edge from parent 
	node[ellipse, below, draw=none] {}
	node[ellipse, above, draw=none]  {}  
}
   	edge from parent 
	node[ellipse, below, draw=none] {}
	node[ellipse, above, draw=none]  {}  
}
    	edge from parent 
	node[ellipse, above, draw=none] {}
	node[ellipse, below, draw=none]  {}  
}
    child {
    node[fill] {}  
    	edge from parent 
	node[ellipse, below, draw=none] {}
	node[ellipse, above, draw=none]  {}  
};
\end{scope}
\begin{scope}[shift={(6,0)}, scale=.7]
\node[inner sep=2pt] (Root)[] {}
    child {
    node[fill] {}  
    	edge from parent 
	node[ellipse, below, draw=none] {}
	node[ellipse, above, draw=none]  {}  
}
    child {
    node[fill] {} 
    child {
    node[fill] {}
    child {
    node[fill] {}
    	edge from parent 
	node[ellipse, below, draw=none] {}
	node[ellipse, above, draw=none]  {}  
}
    	edge from parent 
	node[ellipse, below, draw=none] {}
	node[ellipse, above, draw=none]  {}  
}
    	edge from parent 
	node[ellipse, above, draw=none] {}
	node[ellipse, below, draw=none]  {}  
}
;
\end{scope}
\begin{scope}[shift={(3,3)}, scale=.7]
\node[inner sep=2pt] (Root)[] {}
    child {
    node[fill] {}  
    	edge from parent 
	node[ellipse, below, draw=none] {}
	node[ellipse, above, draw=none]  {}  
}
    child {
    node[fill] {}  
    	edge from parent 
	node[ellipse, below, draw=none] {}
	node[ellipse, above, draw=none]  {}  
}
    child {
    node[fill] {}  
    	edge from parent 
	node[ellipse, below, draw=none] {}
	node[ellipse, above, draw=none]  {}  
}
    child {
    node[fill] {}  
    	edge from parent 
	node[ellipse, below, draw=none] {}
	node[ellipse, above, draw=none]  {}  
}
;
\end{scope}
\draw [->,  ultra thick] (.3,.3)--(1.5,1.5);
\draw [->,  ultra thick] (5.7,.3)--(4.5,1.5);
\end{tikzpicture}}
\caption{$\mathcal{G}_Q^+$ for $Q=A\overline{A}B\overline{B}A\overline{A}B\overline{B}$}
\label{fig: graph with one sink two sources}
\end{figure}
\end{remark}

\begin{corollary}
The graph $\mathcal{G}_P^+$ has no directed cycles.  There is a path consisting only of type-2 valid local moves from every valid plane tree $T$ in $\mathcal{G}_P^+$ to $T_0$.
\end{corollary}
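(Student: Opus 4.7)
The plan is to produce a strictly-decreasing monovariant on type-2 moves, which will immediately forbid directed cycles in $\mathcal{G}_P^+$ and, combined with Theorem \ref{theorem: non-greedy trees have type 2} and Corollary \ref{cor: unique sink}, will force every type-2 descent to terminate at $T_0$.

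Concretely, I would define the potential
\[
\Phi(T) \;=\; \sum_{e(i,j) \in T} (j-i),
\]
the sum of ``widths'' of the edges of $T$. Because every edge contributes a positive integer, $\Phi(T) \ge 0$, and $\Phi$ takes only finitely many values on a finite set of trees. The first step is a direct verification that $\Phi$ strictly decreases along each directed edge of $\mathcal{G}_P^+$. A type-2 move replaces the edges $e(i,j')$ and $e(j,i')$ (with $i<j<i'<j'$) by $e(i,j)$ and $e(i',j')$, so the net change is
\[
\Phi(T') - \Phi(T) \;=\; \bigl[(j-i) + (j'-i')\bigr] - \bigl[(j'-i) + (i'-j)\bigr] \;=\; -2(i'-j) \;<\; 0,
\]
since $i' > j$. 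All other edges of $T$ are unchanged, so this computation is complete.

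Once $\Phi$ is established as a strict monovariant, the first claim is immediate: a directed cycle in $\mathcal{G}_P^+$ would force $\Phi$ to return to its starting value after strictly decreasing at each step, which is impossible. For the second claim, fix any $T \in \mathcal{G}_P^+$ and repeatedly apply type-2 moves. By Theorem \ref{theorem: non-greedy trees have type 2} every vertex other than $T_0$ has outgoing type-2 edges, so the process can continue from any non-$T_0$ vertex; since $\Phi$ strictly decreases on each step and is bounded below, the process must terminate, and termination can only occur at a vertex with no outgoing type-2 edge, i.e.\ a sink. Corollary \ref{cor: unique sink} identifies $T_0$ as the unique sink, so the resulting path from $T$ to $T_0$ uses only type-2 valid local moves, as required.

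There is essentially no hard step here; the only thing that requires care is the sign of the computation $\Phi(T')-\Phi(T)$, but once the indexing convention $i<j<i'<j'$ from Definition \ref{def: local moves} is respected, the inequality $i'>j$ gives the strict decrease without further work. The rest is a textbook monovariant-plus-unique-sink argument built on top of results already proved in the section.
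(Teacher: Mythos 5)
Your proof is correct and follows the same overall architecture as the paper's: exhibit a strictly decreasing potential along type-2 edges to rule out directed cycles, then combine finiteness with Theorem \ref{theorem: non-greedy trees have type 2} and Corollary \ref{cor: unique sink} to conclude that every type-2 descent terminates at $T_0$. The only real difference is the choice of monovariant. The paper uses the total vertex depth $d_T = \sum_{v\in T}\mathrm{dist}(v,v_0)$ and justifies its decrease geometrically with a schematic figure showing how the subtrees rearrange under a type-2 move; you instead use the sum of edge widths $\Phi(T)=\sum_{e(i,j)\in T}(j-i)$, and your computation $\Phi(T')-\Phi(T) = -2(i'-j) < 0$ is a correct, purely arithmetic verification (only the two edges involved in the move change their half-edge labels, so no other terms are affected). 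Your version has the small advantage of being checkable directly from the indexing convention $i<j<i'<j'$ in Definition \ref{def: local moves}, with no appeal to a picture; the paper's version has the advantage of making the geometric content of a type-2 move (a subtree collapsing toward the root) explicit. Either potential does the job, and the remainder of your argument matches the paper's.
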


\begin{proof}
Consider the total distance from the root in a valid plane tree, defined as 
\[d_T = \sum_{v \in T} dist(v,v_0)\] 
where $v_0$ is the root.  Figure \ref{fig: type 2 dist} shows that $d_T$ drops by at least one with each valid local move of type 2 (and more if the subtree labeled C in Figure \ref{fig: type 2 dist} is non-empty).
\begin{figure}[H]
\centering
\begin{tikzpicture}[sloped, level distance=1cm,
level 1/.style={sibling distance=2cm},
level 2/.style={sibling distance=1cm}]
\node at (0,.5) {A};
\node at (-1,-1.5) {B};
\node at (0,-.65) {C};
\node at (1,-1.5) {D};
\begin{scope}[shift={(-5,0)}]
\node at (0,.5) {A};
\node at (-.5,-1) {B};
\node at (0,-2.5) {C};
\node at (.5,-1) {D};
\end{scope}
\draw [->, ultra thick] (-3,-1)--(-2,-1);
\node [above] at (-2.5, -1) {type 2};
\tikzstyle{every node}=[circle, draw, scale=.8, inner sep=2pt]
%\draw[help lines] (-6,-2) grid (2,0);
\node[inner sep=2pt] (Root)[fill] {}
    child {
    node[fill] {}  
    	edge from parent 
	node[ellipse, below, draw=none] {}
	node[ellipse, above, draw=none]  {}  
}
    child {
    node[fill] {}  
    	edge from parent 
	node[ellipse, below, draw=none] {}
	node[ellipse, above, draw=none]  {}  
}
;
\draw [dashed] (0,0.5) circle [radius=0.5];
\draw [dashed] (-1,-1.5) circle [radius=0.5];
\draw [dashed] (1,-1.5) circle [radius=0.5];
\draw [dashed] (0,-.65) circle [radius=0.5];
\node at (0,1){};
\begin{scope}[shift={(-5,0)}]
\node[inner sep=2pt] (Root)[fill] {}
    child {
    node[fill] {}      child {
    node[fill] {}  
    	edge from parent 
	node[ellipse, below, draw=none] {}
	node[ellipse, above, draw=none]  {}  
}
    	edge from parent 
	node[ellipse, below, draw=none] {}
	node[ellipse, above, draw=none]  {}  
};
\draw [dashed] (0,0.5) circle [radius=0.5];
\draw [dashed] (-.5,-1) circle [radius=0.5];
\draw [dashed] (.5,-1) circle [radius=0.5];
\draw [dashed] (0,-2.5) circle [radius=0.5];
\node at (0,1){};
\end{scope}
\end{tikzpicture}
\caption{A generic move of type 2}
\label{fig: type 2 dist}
\end{figure}
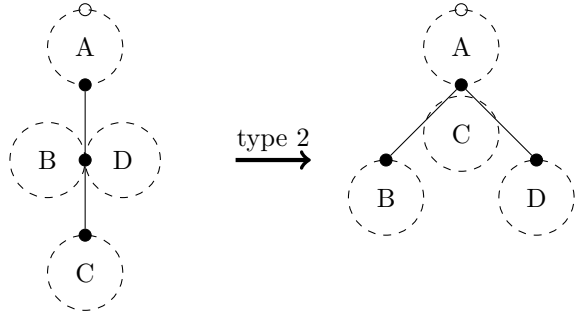
Since the total distance $d_T$ can only decrease there are no directed cycles in the graph $\mathcal{G}_P^+$. 

Moreover Theorem \ref{theorem: graph is connected} proved that the undirected graph $\mathcal{G}_P$ is connected and hence $\mathcal{G}_P^+$ has only one connected component.  Consider the following process: given a tree $T$ in the graph $\mathcal{G}_P^+$ follow any edge out of $T$.  Repeat until you reach a tree with no edges directed out within the graph $\mathcal{G}_P^+$.  Since $\mathcal{G}_P^+$ has no directed cycles and only finite number of trees, we know this process terminates.  Since $T_0$ is the unique tree with no edges directed out of it in $\mathcal{G}_P^+$ we know the process terminates at $T_0$. Thus there is a path consisting only of type 2 valid local moves from every valid plane tree $T$ to $T_0$ as desired. 
\end{proof}

We collect all of this information concisely as follows.

\begin{corollary}
\label{cor: characterize T_0}
Let $P$ be a primary sequence with at least one $P$-valid plane tree.  The following are equivalent.
\begin{itemize}
\item $T_0$ is the unique sink in the graph $\mathcal{G}_P^+$.
\item $T_0$ is the plane tree produced by the greedy algorithm on $P$.
\item For each plane tree $T$ with root $v_0$ define the {\em total distance} $d_T= \sum_{v \in T} dist(v,v_0)$.  Among the $P$-valid plane trees $T_0$ has minimum total distance.
\end{itemize}
\end{corollary}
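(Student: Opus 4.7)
The plan is to observe that the corollary mostly packages results that have already been established, so the proof is a short synthesis rather than new work. The equivalence of the first two bullets is essentially by definition: $T_0$ is defined to be the output of the greedy algorithm, and Corollary \ref{cor: unique sink} establishes that this tree is the unique sink of $\mathcal{G}_P^+$. Combined with Theorem \ref{theorem: non-greedy trees have type 2}, which shows every other $P$-valid plane tree admits a type-2 valid local move, we get that a $P$-valid plane tree is a sink if and only if it equals $T_0$.

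The remaining content is the equivalence with the total-distance characterization. The main input is the observation illustrated in Figure \ref{fig: type 2 dist}: a valid local move of type 2 strictly decreases $d_T$ (by at least one, and by more if the relocated subtree is nontrivial). I would make this quantitative in one line, noting that if a type-2 move takes $e(i,j')$ and $e(j,i')$ to $e(i,j)$ and $e(i',j')$, then every vertex in the subtree rooted at the lower endpoint moves strictly closer to the root, while no other vertex changes distance.

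Given this monotonicity, the equivalence falls out. For the forward direction: from the preceding corollary, every $P$-valid plane tree $T$ admits a finite sequence of type-2 valid local moves terminating at $T_0$, and since total distance strictly decreases at each step, $d_T > d_{T_0}$ whenever $T \neq T_0$. For the converse: if $T$ is any $P$-valid plane tree other than $T_0$, then by Theorem \ref{theorem: non-greedy trees have type 2} it admits a type-2 valid local move to some $T'$, with $d_{T'} < d_T$, so $T$ cannot minimize total distance. Hence $T_0$ is the unique minimizer.

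There is no real obstacle here, the only tiny care needed is making explicit that the type-2 move genuinely drops $d_T$ (not merely leaves it unchanged), which is immediate from the schematic in Figure \ref{fig: type 2 dist} since the subtree labeled $C$ and its descendants each lose exactly one unit of distance to the root. The whole argument is essentially three lines once the preceding results are cited.
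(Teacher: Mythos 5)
Your proposal is correct and matches the paper's intent exactly: the paper gives no separate proof for this corollary, presenting it as a package of the preceding results (Corollary \ref{cor: unique sink}, Theorem \ref{theorem: non-greedy trees have type 2}, and the strict decrease of $d_T$ under type-2 moves shown in Figure \ref{fig: type 2 dist}), which is precisely the synthesis you carry out.
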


\section{Number of Valid Plane Trees: the Size of $\mathcal{G}_P$}
\label{sec: number of valid trees}

A primary sequence $P$ has two main parameters: the length of the sequence $P$ and the size of the alphabet from which the letters of $P$ are drawn.  The vast majority of primary sequences have no valid plane trees at all, as we discuss in the next section.  This section gives results showing how the order of $\mathcal{G}_P$ depends on $n$ and $m$.  It also includes open questions.

Some notation makes the discussion in this and later sections easier. 
\begin{notation} Given sequences of length $2n$ in complementary alphabets with $2m$ letters:
\begin{itemize}
\item $\mathcal{P}(n,m)$ is the set of valid sequences of length $2n$ in a complementary alphabet with $2m$ letters, namely sequences $P$ with at least one $P$-valid plane tree.
\item $N(n,m,k)$ is the set of valid sequences $P\in \mathcal{P}(n,m)$ such that $|\mathcal{V}(P)|=k$.
\item $R(n,m)$ is the set of $k \in \mathbb{Z}$ such that $N(n,m,k)$ is non-empty.
\end{itemize}
\end{notation}

%{\color{blue} remove this?
%We computed the size of the sets $N(n,m,k)$ for small values of $n$ and $m$.  Figure \ref{data fig} marks with an X each $(n,k)$ such that $N(n,1,k)$ is non-empty for $n\leq 7$. } 

\subsection{Describing $N(n,m,k)$}
\revised{
We can bound the $k$ for which $N(n,m,k)$ is nonzero and can describe $N(n,m,k)$ in the boundary cases.  We have the following initial observations, which are generalizations of observations made by Kemp, Mahlburg, Rattan, and Smyth \cite[Proposition 1.7]{KMRS}.
\begin{remark} \label{remark: bounding V(P)} A sequence obtains the maximum number of valid plane trees in the following circumstances: 
\begin{itemize}
\item A primary sequence $P$ of length $2n$ has at most $C_n$ valid plane trees, where $C_n$ is the $n^{th}$ Catalan number.
\item A primary sequence $P$ has exactly $C_n$ valid plane trees if and only if $P$ is of the form $B \overline{B} B \overline{B}...B \overline{B}$. 
\item Using an alphabet of size $2m$ there are $2m$ such sequences, namely $|N(n,m, C_n)|=2m$.
\end{itemize}
\end{remark}}

If a sequence in $\mathcal{P}(n,m)$ has more than $C_{n-1}$ valid plane trees, then all $C_n$ possible plane trees must be valid.  In other words $R(n,m)$ has a large gap between its largest element $C_n$ and its second largest $C_{n-1}$.  Our argument uses the recurrence relation for Catalan numbers on a sequence that is not $(B\overline{B})^n$. % The only minor technicality arises if the sequence only contains one complementary pair.

\revised{
\begin{proposition}\label{proposition: nothing between C_n and C_{n-1}}
\rerevised{Fix $k >0$.} If a sequence $P$ of length $2n$ has $k$ valid plane trees and $k\neq C_n$ then $k$ is at most $C_{n-1}$.
\end{proposition}

\begin{proof}
Let $P \in \mathcal{P}(n,m)$ have exactly $k$ valid plane trees with $k< C_n$.  For each $p_i$ in $P$   the half-edge $i$ can only be paired with $j$ if $p_j=\overline{p_i}$ and $j$ is of the opposite parity to $i$. Suppose $P$ contains the letter $B$ in exactly $\ell_{B}>0$ odd-indexed places. \rerevised{Since there is at least one $P$-valid plane tree, there must be $\ell_{B}$ even-indexed places containing the letter $\bar{B}$. In any valid tree $T\in \mathcal{V}(P)$ these $\ell_B$ edges can, if adjacent, interact via local moves and thus
%. Those $2\ell_A$ half-edges may or may not induce a connected subgraph of any $p$-valid tree, regardless they 
 form at most $C_{\ell_B}$ valid subforests.} This is true for any letter in $\mathcal{A}$ so the set $\mathcal{V}(P)$ has at most $k= \prod \limits_{B\in \mathcal{A}} C_{\ell_B}$ valid trees.  

Let $B_1$ denote the letter $p_1$. The product $C_a\cdot C_b$ is less than or equal to $C_{a+b}$ for any $a,b \geq 0$ by the recurrence relation defining Catalan numbers. Moreover each $\ell_B \leq n-1$ since $P \neq (B \overline{B})^n$ by hypothesis.  This means
$$k \leq \prod \limits_{B\in \mathcal{A}} C_{\ell_B} \leq  C_{\ell_{B_1}} \cdot C_{n-\ell_{B_1}}$$
The product of Catalan numbers $C_i \cdot C_{n-i}$ is maximized when $i=1$ or $i=n-1$.  So 
$$ C_{\ell_{B_1}} \cdot C_{n-\ell_{B_1}} \leq C_1\cdot C_{n-1}=C_{n-1}.$$
\end{proof}

We can also characterize the sequences $P$ that achieve the bound of Proposition \ref{proposition: nothing between C_n and C_{n-1}}.
\begin{corollary}
If a sequence $P = p_1p_2\cdots p_{2n}$ has exactly $C_{n-1}$ valid plane trees then
\begin{itemize}
\item all but one of the odd indexed $p_i$ are the same letter,
\item all but one of the even indexed $p_i$ are the same letter, and
\item if for $i$ and $j$ of opposite parity $p_i$ and $p_j$ differ from the other odd and even indexed letters respectively, then either
\begin{itemize}
\item one of $i,j$ is $1$ and the other is $2n$, or
\item $|i-j|=1$.
\end{itemize}
\end{itemize}
\end{corollary}
\begin{proof}
By the previous proof if more than two letters in $\mathcal{A}$ appear as $p_i$ for $i$ odd then $|\mathcal{V}(P)|$ will be strictly less than $C_{n-1}$.  If exactly two letters appear in odd-indexed positions, say $A$ and $B$ (possibly complements), then there are at most $C_{\ell_A}\cdot C_{\ell_B}$ $P$-valid plane trees. By hypothesis there are $C_{n-1}$ total $P$-valid trees so without loss of generality $\ell_A=n-1$ and $\ell_B=1$.  Thus all but one odd-indexed $p_i$ are $A$ and because $P$ is valid, all but one even-indexed $p_i$ are $\overline{A}$.

%Suppose that $P$ contains $A$'s for every odd indexed letter except one and $\overline{A}$ for every even indexed letter except one.  
\rerevised{Let $p_i$ and $p_j$ be the odd-indexed letter that differs from $A$ and the even-indexed letter that differs from $\overline{A}$.  We assume that $i < j$ though not that $i$ is the odd index.} Then any $P$-valid plane tree contains the edge $e(i,j)$ according to the following schematic:
\begin{figure}[H]
\centering
 \begin{tikzpicture}[level distance=2cm,
level 1/.style={sibling distance=2cm},
level 2/.style={sibling distance=1cm}]
\node [below, ellipse, draw, dashed] at (0,-2) {\begin{tabular}{c} $a$ edges \end{tabular}};
\node [above, ellipse, draw, dashed] at (0,0) {\begin{tabular}{c} $n-a-1$ edges \end{tabular}};
\tikzstyle{every node}=[circle, draw, scale=.8, inner sep=2pt]
\vspace{10pt}
\node (Root)[fill] {}
    child {
    node[fill] {}
	edge from parent 
	node[left, draw=none] {$i$}
	node[right, draw=none]  {$j$}
};

     \node at (0,1){};
\end{tikzpicture}
\caption{Edge $e(i,j)$ partitions a valid plane tree into valid subtrees.} \label{figure: edge partitioning tree}
\end{figure}
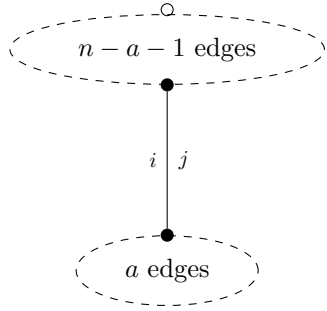
There are $C_a\cdot C_{n-a-1} = C_{n-1}$ possible $P$-valid plane trees 
%where $a=\frac{j-i-1}{2}$ and since we know that there are $C_{n-1}$, we must either  have that
so either $a=0$ and $e(i,j)$ is a leaf or $a=n-1$ and $e(i,j)$ is the unique edge incident to the root, as claimed.
\end{proof}}

The next corollary follows immediately by counting possible locations and letters for $A$ and $B$.
\begin{corollary}
 There are $2m(2m-1)(2n)$ sequences $P\in \mathcal{P}(n,m)$ with exactly $C_{n-1}$ valid plane trees, i.e. 
\[|N(n,m,C_{n-1})|=4m(2m-1)n.\]
\end{corollary}

We can  characterize the set $N(n,1,1)$ using a similar argument. 

\begin{proposition}
The set $N(n,1,1)$ has size $2n$.
\end{proposition}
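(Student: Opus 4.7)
The plan is to characterize the sequences in $N(n,1,1)$ via the structure of $T_0(P)$ and then count directly. First I would observe that for $P \in \mathcal{P}(n,1)$, the condition $|V(P)|=1$ is equivalent to $T_0(P)$ having no valid local moves, by Theorem \ref{theorem: graph is connected}. Corollary \ref{corollary: $T_0$ has no type 2} rules out valid type 2 moves from $T_0$ automatically, so the question reduces to determining when $T_0$ admits no valid type 1 move.

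Next I would translate this into a structural condition on $T_0$. For sibling edges $e(i,j)$ and $e(i',j')$ (with $i<j<i'<j'$), a valid type-1 move requires $p_i$ and $p_{j'}$ to be complementary and $p_j$ and $p_{i'}$ to be complementary; since each existing edge already carries complementary labels, in the $\{B,\overline{B}\}$ alphabet this collapses to the single requirement $p_i = p_{i'}$, i.e.\ the two left half-edges carry the same letter. Separately, a short stack analysis of the greedy algorithm shows that at any non-root vertex $v$ with parent edge $e(i_0,j_0)$, every child edge of $v$ has its left half-edge labeled $p_{i_0}$: immediately after $p_{i_0}$ is pushed, and each time a completed subtree of $v$ restores $p_{i_0}$ to the top of the stack, the next letter read must be pushed rather than matched, or else $e(i_0,j_0)$ would close prematurely. (Equivalently, this is the contrapositive of Corollary \ref{corollary: $T_0$ has no type 2} applied edge-by-edge to each parent-child pair.) Consequently, any non-root vertex of $T_0$ with two or more children produces two siblings with identical left-labels, hence a valid type-1 move. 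Therefore every non-root vertex of $T_0$ has at most one child, and by pigeonhole the root itself has at most two children in the two-letter alphabet, with distinct left-labels when it has exactly two.

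The shape of $T_0$ is thus forced into one of two forms: (i) a single descending chain of $n$ edges from the root, or (ii) two descending chains of lengths $\ell_1,\ell_2 \geq 1$ with $\ell_1+\ell_2 = n$ emanating from the root. In case (i) the parent-child propagation forces all $n$ edges to share a common left-label, yielding $2$ sequences (one for each letter). In case (ii) there are $n-1$ ordered compositions $(\ell_1,\ell_2)$, and the two chains' starting letters must be opposite but are otherwise determined by the same propagation, yielding $2(n-1)$ sequences. The grand total is $2+2(n-1)=2n$. Conversely every tree-with-labeling of this shape produces a sequence $P$ whose greedy output is exactly the prescribed tree (parent-child left-labels agree, so by Corollary \ref{cor: unique sink} this tree is $T_0(P)$) and which admits no valid type 1 move by construction, giving $|V(P)|=1$; so the parametrization is a bijection and the count is exact.

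The step I expect to require the most care is the assertion that children of a non-root vertex in $T_0$ all share the parent edge's outer-left label --- this is where the two-letter alphabet interacts decisively with greedy dynamics to force the rigid chain structure above. Once that property is cleanly established, the remaining work is just enumerating compositions of $n$ into one or two ordered parts.
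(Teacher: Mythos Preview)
Your argument is correct and lands on the same structural picture as the paper---the unique valid tree must be a path, with the left half-edge label constant along each arm---but you get there by a different mechanism. The paper works with an \emph{arbitrary} $P$-valid tree and applies pigeonhole directly: any vertex incident to three edges forces two adjacent edges whose labels permit a valid local move, so the tree is a path; a short label analysis along the path then yields the $2n$ count (phrased as ``$2$ choices for the side labeled $B$ times $n$ choices for the root''). You instead restrict attention to $T_0(P)$ from the start and prove the propagation lemma---children of a non-root vertex in $T_0$ inherit the parent edge's left label---as a consequence of the no-type-2 property, then deduce the path shape and enumerate via compositions. Your route leans more heavily on the $T_0$ machinery already developed (Corollaries~\ref{corollary: $T_0$ has no type 2} and~\ref{cor: unique sink}) and makes the converse direction (that each of the $2n$ candidates really has $|V(P)|=1$) fully explicit, which the paper leaves implicit. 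The paper's pigeonhole is shorter and applies to any valid tree rather than just $T_0$; your propagation lemma is a cleaner structural statement about $T_0$ that could be reused elsewhere.
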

\begin{proof}
We show that unless the primary sequence $P$ on the alphabet $\{B, \overline{B}\}$ has a very special form, then we can find a $P$-valid plane tree with a valid local move.  This would imply $\mathcal{V}(P)$ has more than one element and thus $P \not \in N(n,1,1)$.

Suppose $v$ is a vertex in a $P$-valid plane tree on the alphabet $\{B, \overline{B}\}$ that is incident to three edges.  Then two of the left-edges incident to $v$ have the same label by the pigeonhole principle.  Thus there is a valid local move at the vertex $v$.  It follows that if $P$ has only one $P$-valid plane tree then that tree is a path.  The root may be anywhere on that path.  To prevent local moves, all edges to the left of the root must have the same letter on their left half-edges while edges to the right must have its complement on their left half-edges. %Moreover each pair of adjacent edges in that path must share the same left half-edge label otherwise there is a $P$-valid local move.  So if $\mathcal{V}(P)$ has a unique element $T$ then $B$ is the only letter on one side of $T$ and  $\overline{B}$ is the only letter on the other side.  
There are two choices for the side that has $B$ and $n$ choices for the location of the root.  Thus there are $2n$ primary sequences $P$ with $|\mathcal{V}(P)|=1$.
\end{proof}

Without loss of generality, all such sequences $P$ must  have the form $P=B^k(\overline{B})^n B^{n-k}$. \rerevised{Heitsch and Poznanovi{\'c} used similar arguments to prove that if $T$ is a plane tree in which the maximum degree of a vertex is $d$, then there exists a sequence $P \in N(n, m,1)$ with $T$ as its unique valid tree if and only if $m \geq \lceil \frac{d}{2} \rceil$ \cite{DiscreteTopologicalModels}. These arguments lead us in two directions.}
\revised{
\begin{question} Consider $N(n,m,1)$ for $m>1$.
 The previous argument can be modified to show that any plane tree with a vertex of degree at least $2m+1$ cannot represent a primary sequence for $P \in N(n,m,1)$. 
\begin{itemize}
\item Can we characterize the primary sequences $P \in N(n,m,1)$?  
\item Given a plane tree $T$  can we characterize the sequences $P\in N(n,m,1)$ such that $\mathcal{V}(P)=\{T\}$? 
\end{itemize}
\rerevised{A related question, about the total number of possible secondary structures, can be addressed using Motzkin numbers, as described in \cite{DiscreteTopologicalModels}. However that enumeration does not count the number of sequences that have only one secondary structure.}

\end{question}
}

\subsection{Comparing $R(n,m)$ for different $n$ and $m$}

Proposition \ref{proposition: nothing between C_n and C_{n-1}} can be interpreted as showing that $|R(n,m)| \leq C_{n-1}+1$.  We further analyze the sets $R(n,m)$ and exploit prime factorization to construct sets $R(n,m)$ that contain desired elements.

\begin{proposition}
Let $k\in \mathbb{Z}_{\geq 0}$ factor into a product of primes $k=k_1^{\alpha_1}\cdots k_\ell^{\alpha_\ell}$ such that for each $i$, there exists a pair $(n_i,m_i)$ with $k_i \in R(n_i,m_i)$.  Then 
\[k \in R \left( \sum \limits_{i=1}^\ell \alpha_in_i, \sum \limits_{i=1}^\ell \alpha_im_i  \right)\]
\end{proposition}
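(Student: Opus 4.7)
The plan is to construct an explicit primary sequence $P$ of length $2 \sum_i \alpha_i n_i$ in a complementary alphabet with $\sum_i \alpha_i m_i$ pairs that satisfies $|V(P)|=k$. For each prime factor $p_i$, fix a witness sequence $P_i$ in a complementary alphabet $\mathcal{A}_i$ with $m_i$ pairs so that $P_i$ has length $2n_i$ and $|V(P_i)|=p_i$. Make $\alpha_i$ copies $P_i^{(1)}, \ldots, P_i^{(\alpha_i)}$ of each $P_i$, arranging (by relabeling letters) that all $\sum_i \alpha_i$ copies use pairwise \emph{disjoint} alphabets. Then form the concatenation
\[
P \;=\; P_1^{(1)} P_1^{(2)} \cdots P_1^{(\alpha_1)} \, P_2^{(1)} \cdots P_\ell^{(\alpha_\ell)}.
\]
Counting letters and pairs shows $P$ has exactly the length and alphabet size claimed.

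The heart of the argument is a multiplicativity lemma: whenever $Q$ and $R$ are primary sequences in disjoint complementary alphabets, one has $|V(QR)| = |V(Q)| \cdot |V(R)|$. To prove this, fix a $QR$-valid plane tree $T$ and let $2n_Q$ denote the length of $Q$. Since the alphabets are disjoint, the complement of a letter of $Q$ lies in $Q$'s alphabet and the complement of a letter of $R$ lies in $R$'s alphabet, so every edge of $T$ must have both endpoints in the $Q$-block (positions $1,\ldots,2n_Q$) or both in the $R$-block (positions $2n_Q+1,\ldots,2n_Q+2n_R$). Hence the edges of $T$ lying in the first block form a $Q$-valid plane tree $T_Q$, and, after a shift of indices by $-2n_Q$, those in the second block form an $R$-valid plane tree $T_R$. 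Conversely, given any pair $(T_Q, T_R)$ of valid plane trees, the union of their non-crossing matchings on the shared index set $\{1,\ldots,2n_Q+2n_R\}$ is itself non-crossing (the two ranges of indices are disjoint and consecutive), so it is a plane tree and it is $QR$-valid by construction. This bijection establishes $|V(QR)| = |V(Q)| \cdot |V(R)|$.

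Applying the lemma inductively to the concatenation above yields
\[
|V(P)| \;=\; \prod_{i=1}^\ell |V(P_i)|^{\alpha_i} \;=\; \prod_{i=1}^\ell p_i^{\alpha_i} \;=\; k,
\]
placing $k$ in $R\!\left(\sum_i \alpha_i n_i,\; \sum_i \alpha_i m_i\right)$ as required. The principal obstacle is the multiplicativity lemma: one must verify that a $QR$-valid plane tree cannot contain any edge straddling the boundary between the $Q$- and $R$-blocks. Disjointness of the chosen alphabets is exactly what makes this clean, and it is precisely why we use a \emph{fresh} alphabet for each of the $\sum_i \alpha_i$ copies rather than reusing $\mathcal{A}_i$ for all $\alpha_i$ copies of $P_i$.
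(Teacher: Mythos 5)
Your proposal is correct and follows essentially the same route as the paper: concatenate disjoint-alphabet copies of witness sequences and observe that valid plane trees factor as products over the blocks. Your explicit two-block multiplicativity lemma, applied inductively, is just a slightly more formal packaging of the paper's direct observation that every $P$-valid tree consists of independent $P_{i,j}$-valid subtrees joined at the root.
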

\begin{proof}
Fix a complementary alphabet with $2\sum_{i=1}^{\ell} \alpha_i m_i$ letters.  For each $i$ with $1 \leq i \leq \ell$ let $P_i$ be a primary sequence in $N(n_i, m_i, k_i)$.  Such a $P_i$ exists since $k_i \in R(n_i,m_i)$. For each $i$ make $\alpha_i$ copies of $P_i$ and denote them $P_{i,1}, P_{i,2},\ldots,P_{i,\alpha_i}$.  Now change the complementary letters as needed so that no two primary sequences $P_{i,j}, P_{i',j'}$ share a letter  \rerevised{unless $i=i'$ and $j=j'$. } 

Consider the sequence $P=P_{1,1}P_{1,2}P_{1,3}\cdots P_{\ell,\alpha_{\ell}}$ obtained by concatenating these sequences. No half-edge from $P_{i,j}$ can match with a half-edge from $P_{i',j'}$ unless $i=i'$ and $j=j'$ since the alphabets used for the subsequences $P_{i,j}$ and $P_{i',j'}$ are distinct.  Thus every $P$-valid plane tree consists of $P_{i,j}$-valid plane subtrees joined at a common root and the only $P$-valid local moves are $P_{i,j}$-valid local moves on the subtrees.  Since there are $k_i$ of the $P_{i,j}$-valid plane trees and since the $P_{i,j}$-valid local moves are independent, there are $k=k_1^{\alpha_1}\cdots k_\ell^{\alpha_\ell}$ trees in $\mathcal{V}(P)$ as desired.
\end{proof}

Wagner proved a conjecture of an earlier version of this manuscript that for all $k \in \mathbb{Z}_{\geq 0}$ there exists $n,m$ such that $k\in R(n,m)$.  In fact, he showed the stronger result that every integer $k$ appears in $R(n,1)$.  He also proved that $R(n,m)$ is not generally the same set as $R(n,1)$ which disproved another conjecture of ours \cite{Wagner}.

\section{How many sequences are valid?
} \label{section: number of valid sequences}

This last section enumerates the total number of primary sequences that have {\em any} valid plane trees. These sequences are extremely unusual, in the sense that the ratio of sequences that are valid in a fixed alphabet approaches zero as sequence length increases.  

Our main tool comes from Corollary \ref{cor: characterize T_0}: every primary sequence $P$ corresponds to at most one labeled tree  with no valid local moves of type 2, namely the tree {$T_0$} produced by the greedy algorithm.  Thus we may count the number of valid primary sequences by counting the number of labeled plane trees with no type $2$ local moves. 

\begin{lemma} \label{lemma: every greedy output attained}
Every plane tree $T$ is  the output $T_0(P)$ of the greedy algorithm for some valid sequence $P$.
\end{lemma}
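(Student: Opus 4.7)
The plan is to construct $P$ using the two-letter alphabet $\{B, \overline{B}\}$. Given a plane tree $T$ with $2n$ half-edges, define $P = p_1 p_2 \cdots p_{2n}$ by setting $p_i = B$ if the half-edge labeled $i$ is the left half-edge of its edge in $T$ (that is, $i < j$ for the unique $j$ with $e(i,j) \in T$) and $p_i = \overline{B}$ otherwise. By construction every edge $e(i,j) \in T$ with $i<j$ satisfies $p_i = B$ and $p_j = \overline{B}$, so $T$ is immediately a $P$-valid plane tree.

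The core step is to verify that the greedy algorithm applied to this $P$ returns exactly $T$, so that $T_0(P) = T$. I would proceed by induction on the position $i$, maintaining the claim that after the greedy algorithm processes $p_1, \ldots, p_i$, its set of matched edges coincides with $\{e(k,\ell) \in T : \ell \le i\}$, and its set of unmatched positions is exactly the set of left half-edges $k \le i$ whose partner in $T$ lies strictly past $i$. If $i$ is a left half-edge, then $p_i = B$ and by the inductive hypothesis the most recent unmatched letter $p_{j_i}$ is also a left half-edge labeled $B$, which is not complementary to $p_i$, so greedy correctly leaves $i$ unmatched at this step. If instead $i$ is the right half-edge of the edge $e(k,i) \in T$, then Proposition~\ref{proposition: properties of plane trees}(2) guarantees that the half-edges $k+1, \ldots, i-1$ form a subtree of $T$ and are therefore matched entirely among themselves; by the inductive hypothesis greedy has matched them in exactly that way, so the most recent unmatched position at step $i$ is precisely $k$, and since $p_k = B$ is complementary to $p_i = \overline{B}$ the greedy algorithm forms the edge $e(k,i)$, in agreement with $T$.

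Conceptually, this argument is the classical bijection between plane trees and balanced parenthesis sequences: viewing $B$ as an open parenthesis and $\overline{B}$ as a close parenthesis, the word $P$ is the parenthesization encoding $T$, and the greedy algorithm implements the standard stack-based matching. The only substantive step is the inductive verification above, and this follows transparently from the non-crossing property of plane trees; I do not expect any serious obstacle.
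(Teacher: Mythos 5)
Your proof is correct, but it takes a genuinely different route from the paper's. The paper labels the edges of $T$ top-down, choosing for each edge adjacent to the root an arbitrary complementary pair and for each other edge a label that creates no type-2 move with its parent; it then invokes the characterization of $T_0$ as the unique $P$-valid tree with no valid type-2 local moves (Theorem \ref{theorem: non-greedy trees have type 2} and Corollary \ref{cor: characterize T_0}) to conclude the resulting labeled tree must be the greedy output. You instead pick one canonical word --- the balanced-parenthesis encoding of $T$ over a single complementary pair --- and verify directly by induction on the position that the greedy algorithm, acting as a stack-based parenthesis matcher, reconstructs $T$ edge for edge; your inductive invariant is maintained correctly, with the right-half-edge case resting on Proposition \ref{proposition: properties of plane trees}(2) exactly as you say. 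What your argument buys is self-containedness: it does not depend on the unique-sink machinery of the previous section, and it makes the conceptual identity between the greedy algorithm and the classical tree/parenthesization bijection explicit. What the paper's argument buys is that its construction produces \emph{all} the labelings counted in the enumeration of $|\mathcal{P}(n,m)|$ that immediately follows ($2m$ choices at the root, $2m-1$ elsewhere), so the lemma's proof doubles as the setup for that count; your single word per tree proves the lemma as stated but would need the paper's labeling scheme anyway for the subsequent theorem. One cosmetic point: you should note (as you implicitly assume) that when position $i$ is a left half-edge and no unmatched position precedes it, the greedy algorithm trivially leaves it unmatched --- this costs nothing but completes the case analysis.
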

\begin{proof}
Given a plane tree $T$, label the edges incident to the root with any complementary pair from the alphabet.  Each edge not incident to the root has a parent edge.  Starting from the edges adjacent to the root, label each edge so that it has no local move with its parent edge.  Then the labeled tree $T$ has no local moves of type 2 and thus is the output $T_0(P)$ of the greedy algorithm for the sequence $P$ obtained by reading the labels on the half-edges. 
\end{proof}
Note that a labeled tree generated by this procedure does not produce a tree with {\it no} local moves, only with none of type 2.  For example if a plane tree has a vertex of degree $2m+1$ or more, there is always a local move; the process in the previous lemma simply ensures that it is a local move of type 1.  

We can enumerate valid sequences by counting labeled trees with no local moves of type 2.
\begin{theorem}
The number of valid primary sequences is
\begin{equation}
\label{eq: valid seq non recur}
|\mathcal{P}(n,m)|= \sum \limits_{\substack{\lambda=(\lambda_1,...,\lambda_k) \\ \text{a composition of $n$}}} (2m)^k\cdot (2m-1)^{n-k}\cdot \prod \limits_{i=1}^k C_{\lambda_i-1}  
\end{equation}
where $C_{\lambda_i-1}$ is the $(\lambda_i-1)^{th}$ Catalan number.
\end{theorem}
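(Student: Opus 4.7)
The plan is to use Corollary~\ref{cor: characterize T_0} together with the preceding lemma to identify $|\mathcal{P}(n,m)|$ with the number of labeled plane trees having $n$ edges, complementarily-matched half-edges on each edge, and no valid local move of type~2. The lemma supplies surjectivity (every plane tree arises as $T_0(P)$ for some valid $P$), Corollary~\ref{cor: characterize T_0} identifies the target as the labeled trees admitting no valid type-2 move, and injectivity is automatic since $P$ is read off the half-edge labels counterclockwise.

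The central step is to translate the global no-type-2 condition into a local constraint on labels. A valid type-2 move acts on a stack: a parent edge $e(i,j')$ with one of its child edges $e(j,i')$, where $i<j<i'<j'$. Requiring both the pre-move stack and the post-move V-configuration $e(i,j),e(i',j')$ to be $P$-valid (that is, $p_i=\overline{p_{j'}}$, $p_j=\overline{p_{i'}}$, $p_i=\overline{p_j}$, and $p_{i'}=\overline{p_{j'}}$) reduces after a short algebraic check to the single condition that the child edge's left half-edge label equals the complement of the parent edge's left half-edge label. Consequently, a labeled plane tree admits no valid type-2 move if and only if, for every non-root-adjacent edge $e$, the left half-edge label of $e$ is \emph{not} the complement of the left half-edge label of $e$'s parent edge.

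With this local constraint in hand the enumeration is routine. For a fixed plane tree $T$ with root degree $k$, the right half-edge label of each edge is forced by complementarity; each of the $k$ root-adjacent edges has $2m$ free choices for its left label, and each of the remaining $n-k$ edges has $2m-1$ choices (every letter except the one forbidden complement), yielding $(2m)^k(2m-1)^{n-k}$ valid labelings of $T$. Now group plane trees by the composition $\lambda=(\lambda_1,\ldots,\lambda_k)$ of $n$ whose parts record the edge counts of the $k$ subtrees hanging from the root (including the root-adjacent edge in each). The number of plane trees with this shape is $\prod_{i=1}^k C_{\lambda_i-1}$, since below each root-adjacent edge sits an arbitrary plane tree on $\lambda_i-1$ edges. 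Multiplying the per-tree labeling count by the shape count and summing over all compositions of $n$ yields the asserted formula. The main technical point is the local-constraint derivation: one must verify that constraints from different parent--child pairs decouple (so each child of a parent independently contributes a $2m-1$ factor) and that no further constraints arise from other edge adjacencies, since type-2 moves apply only to stack configurations.
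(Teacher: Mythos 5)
Your proposal is correct and follows essentially the same route as the paper: reduce to counting labeled plane trees with no valid type-2 move via the lemma and the characterization of $T_0$, decompose by the composition recording the subtree sizes at the root, and count labelings as $(2m)^k(2m-1)^{n-k}$ times $\prod_i C_{\lambda_i-1}$. Your explicit derivation of the local parent--child label constraint (and the remark that the constraints decouple) is a welcome elaboration of a step the paper's proof treats as immediate, but it is not a different argument.
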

\begin{proof}
By Lemma \ref{lemma: every greedy output attained} we can count the number of valid sequences by counting the number of plane trees with no local moves of type 2.  We partition the set of plane trees based on the number of edges incident to the root and the size of the subtrees coming off of those edges.

Suppose there are $k$ edges incident to the root.  The $k$ subtrees descending from those $k$ edges divide the $n$ edges of the tree into $k$ parts, say with the $i^{th}$ of size $\lambda_i$ including the edge incident to the root.  This gives a composition $\lambda=(\lambda_1,...,\lambda_k)$ of $n$.  Moreover the  $i^{th}$ subtree can be any of $C_{\lambda_i-1}$ possible plane trees. 

Label the edges in order of their distance to the root.  Each of the $k$ edges incident to the root has no parent edge and thus has $2m$ possible labels.  Every other edge in the tree has a parent so there are $2m-1$ ways to label it without creating a local move of type 2.  Thus there are $2m\cdot (2m-1)^{\lambda_i-1}\cdot C_{\lambda_i-1}$ possibilities for labeling the $i^{th}$ subtree. 

Taking the product over all $k$ such subtrees, there are $(2m)^k\cdot (2m-1)^{n-k}\cdot \prod \limits_{i=1}^k C_{\lambda_i-1}$  labeled plane trees with no valid local moves of type $2$ corresponding to the composition $\lambda$.  Summing over all compositions $\lambda$ of $n$ gives the desired result.
\end{proof}

The other main result of this section is that the ratio of valid primary sequences to all primary sequences approaches zero as $n$ increases. 

\begin{theorem}\label{theorem: limiting zero}
Fix $m$ and let $S(n,m)$ denote all possible words of length $2n$ over a complementary alphabet of size $2m$. Then
\[\lim_{n \to \infty} \frac{|\mathcal{P}(n,m)|}{|S(n,m)|} = 0.\]
\end{theorem}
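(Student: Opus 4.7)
Since each of the $2n$ positions of a word independently takes any of $2m$ letters, $|S(n,m)| = (2m)^{2n}$, so the task reduces to showing $|\mathcal{P}(n,m)| = o((2m)^{2n})$. The plan is to upper bound $|\mathcal{P}(n,m)|$ using the explicit formula (\ref{eq: valid seq non recur}) from the preceding theorem, and then conclude via Stirling's asymptotic, handling $m \geq 2$ and $m = 1$ separately.

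For $m \geq 2$, I would use the trivial estimate $(2m)^k(2m-1)^{n-k} \leq (2m)^n$ in (\ref{eq: valid seq non recur}) to obtain
\[ |\mathcal{P}(n,m)| \;\leq\; (2m)^n \sum_{\lambda \models n} \prod_{i=1}^{\ell(\lambda)} C_{\lambda_i-1} \;=\; (2m)^n\, C_n, \]
where the last equality is the classical decomposition of a plane tree into its root-branches (a branch with $\lambda_i$ edges contributes $C_{\lambda_i-1}$ shapes for the subtree descending from its root edge). Dividing by $|S(n,m)|$ and applying the Stirling asymptotic $C_n \sim 4^n/(\sqrt{\pi}\, n^{3/2})$ gives $|\mathcal{P}(n,m)|/|S(n,m)| \leq C_n/(2m)^n \sim (2/m)^n/(\sqrt{\pi}\, n^{3/2})$. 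Since $2/m \leq 1$ for all $m \geq 2$, the right-hand side vanishes as $n \to \infty$.

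For $m=1$ the same bound gives only $C_n/2^n \sim 2^n/n^{3/2} \to \infty$, so a sharper necessary condition is needed. By Proposition \ref{proposition: properties of plane trees}(3) every edge $e(i,j)$ in a plane tree pairs one odd-indexed half-edge with one even-indexed half-edge; hence in any $P$-valid plane tree the two letters at such matched positions must be complementary. Thus a valid $P \in \{B, \overline{B}\}^{2n}$ must satisfy $\#\{i\text{ odd}: p_i = B\} = \#\{i\text{ even}: p_i = \overline{B}\}$, and enumerating such words by the value $a$ of this common count yields
\[ |\mathcal{P}(n,1)| \;\leq\; \sum_{a=0}^{n} \binom{n}{a}^{2} \;=\; \binom{2n}{n} \]
by Vandermonde. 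Stirling then gives $\binom{2n}{n}/4^n \sim 1/\sqrt{\pi n} \to 0$, completing the case $m=1$.

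The main obstacle is the case $m=1$: the uniform bound $(2m)^n C_n$ is too weak there (it exceeds $|S(n,1)|$ by a factor of $2^n$), because when the alphabet is tiny the number of valid Catalan tree shapes available to a given word is comparatively large. The escape is to observe that for $m=1$ the parity restriction of Proposition \ref{proposition: properties of plane trees}(3) becomes the binding combinatorial constraint and supplies the missing polynomial decay; for $m \geq 2$ the Catalan shape count is already strong enough. I expect no other technical difficulties, since the Catalan identity and Vandermonde identity invoked are standard and the Stirling asymptotics immediately drive the ratios to zero.
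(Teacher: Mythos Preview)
Your proof is correct and follows essentially the same approach as the paper: split into the cases $m\geq 2$ (bound $|\mathcal{P}(n,m)|$ by $(2m)^n C_n$) and $m=1$ (bound $|\mathcal{P}(n,1)|$ by $\binom{2n}{n}$), then show each ratio tends to zero. The only cosmetic differences are that the paper derives the $(2m)^n C_n$ bound by directly counting labeled plane trees rather than via formula~(\ref{eq: valid seq non recur}), obtains the $\binom{2n}{n}$ bound from the simpler observation that $P$ must contain equally many $B$'s and $\overline{B}$'s (equivalent to your parity condition), and uses elementary estimates in place of Stirling.
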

\revised{
\begin{proof}
When $m=1$ the number of valid plane sequences $|\mathcal{P}(n,1)|$ is at most $\binom{2n}{n}$ since each valid plane sequence must have the same number of letters $B$ as $\overline{B}$.  (In fact $|\mathcal{P}(n,1)| = \binom{2n}{n}$.)  There are a total of $2^{2n}$ words in $\{B,\overline{B}\}$ of length $2n$. By Stirling's approximation for $n!$ we know
%$$\binom{2n}{n} \sim  \frac{4^n}{\sqrt{\pi n}}.$$
$$\lim \limits_{n\to \infty}\frac{\binom{2n}{n}}{2^{2n}} =\lim \limits_{n\to \infty}  \frac{\frac{4^n}{\sqrt{\pi n}}}{2^{2n}}= \lim \limits_{n \to \infty}\frac{1}{\sqrt{\pi n}} = 0.$$

%\[ \begin{array}{lcl}
%\frac{\binom{2n}{n}}{2^{2n}}& \sim & \left(\sum \limits_{i=0}^{2n} \frac{\binom{2n}{i}}{\binom{2n}{n}}\right)^{-1}\\
%
%\frac{\binom{2n}{n}}{2^{2n}} &=& \vspace{0.25em} \frac{(2n)(2n-2)(2n-4)\cdots 4 \cdot 2}{2^n n!} \cdot \frac{(2n-1)(2n-3)(2n-5)\cdots 5 \cdot 3 \cdot 1}{2^n n!} \\
%&=& \vspace{0.25em} \frac{2n-1}{2n} \frac{2n-3}{2(n-1)} \frac{2n-5}{2(n-2)} \cdots \frac{3}{2 \cdot 2} \frac{1}{2 \cdot 1} \\
%&=& \prod_{i=0}^{n-1} \left(1 - \frac{1}{2(n-i)}\right) = \prod_{i=1}^{n} \left(1 - \frac{1}{2i}\right)
%\end{array} \]

\rerevised{%When $m>1$ the number of valid sequences $|\mathcal{P}(n,m)|$ is at most the number of valid plane trees, since each valid sequence $P$ may have several $P$-valid plane trees.  
When $m>1$ we obtain an upper bound on $|\mathcal{P}(n,m)|$ by overcounting the set of valid plane trees $T_0$ that can be produced by the greedy algorithm. The greedy algorithm produces different outputs on different valid sequences because there is only one way to read the sequence off of the labeled plane tree.  We use the total number of valid plane trees as our upper bound; this is a strict overcount because some valid plane trees have a valid local move of type 2, unlike output of the greedy algorithm.  There are $(2m)^n C_n$ valid plane trees since there are $C_n$ plane trees with $n$ edges, $2m$ ways to pick the letter labeling each left-half-edge, and $1$ way to pick the complementary letters on the right-half-edges.  There are a total of $(2m)^{2n}$ words in the alphabet of length $2n$.}  Thus
\[\lim_{n \to \infty} \frac{|\mathcal{P}(n,m)|}{|S(n,m)|} <  \lim_{n \to \infty} \frac{(2m)^n C_n}{(2m)^{2n}} = \lim_{n \to \infty} \frac{1}{n+1} \frac{\binom{2n}{n}}{(2m)^n} \rerevised{\leq} \lim_{n \to \infty} \frac{1}{n+1} \frac{\binom{2n}{n}}{2^{2n}} \]
using the definition of Catalan numbers and the fact that $m \geq 2$.   The limit of the ratio $\frac{\binom{2n}{n}}{2^{2n}}$ was just computed to be zero, so as $n$ grows we obtain
\[\lim_{n \to \infty} \frac{|\mathcal{P}(n,m)|}{|S(n,m)|} <  \lim_{n \to \infty} \frac{1}{n+1} \frac{\binom{2n}{n}}{2^{2n}} < \lim_{n \to \infty} \frac{\binom{2n}{n}}{2^{2n}}=0\]
which proves the claim.
\end{proof}}

\section{Acknowledgements}
\label{sec:ack}
The authors gratefully acknowledge useful conversations with Christine Heitsch, very helpful comments from the referee, and the support of the Smith College Center for Women in Mathematics.

\nocite{*}
\bibliography{mybib}{}

\end{document}